\documentclass[11pt,a4paper]{article}
\usepackage[T1]{fontenc}
\usepackage[utf8]{inputenc}
\usepackage{lmodern,amsmath,amssymb,amsthm}
\usepackage[margin=22mm]{geometry}
\usepackage{microtype,booktabs,longtable,array,xcolor}
\usepackage[hidelinks]{hyperref}
\definecolor{notesblue}{RGB}{25,70,132}

\newtheorem{theorem}{Theorem}[section]
\newtheorem{lemma}[theorem]{Lemma}
\newtheorem{claim}[theorem]{Claim}
\newtheorem{corollary}[theorem]{Corollary}
\theoremstyle{remark}

\newcommand{\F}{\mathbb F}
\newcommand{\R}{\mathbb R}
\newcommand{\Z}{\mathbb Z}
\newcommand{\ip}[2]{\langle #1,#2\rangle}

\title{A polynomial method over formal power series}
\author{%
Artur Akhiiarov\thanks{Laboratory of Advanced Combinatorics and Network Applications, Moscow Institute of Physics and Technology (State University), Moscow, Russia.}
\and
Mikhail Koshelev\thanks{Moscow Institute of Physics and Technology, Moscow State University.}
\and
Andrey Kupavskii\thanks{Moscow Institute of Physics and Technology, Russia, St. Petersburg State University.}
\and
Andrei Raigorodskii\thanks{Department of Discrete Mathematics and Laboratory of Advanced Combinatorics and Network Applications, Moscow Institute of Physics and Technology (State University), Moscow, Russia; Department of Mathematical Statistics and Random Processes, Faculty of Mechanics and Mathematics, Lomonosov Moscow State University, Moscow, Russia; Caucasus Mathematical Center, Adyghe State University, Maykop, Republic of Adygea, Russia.}%
}
\date{}
\begin{document}
\maketitle
\begin{abstract}
We give new upper bounds on families of integer vectors with prescribed coordinate multiplicities and a forbidden scalar product. Our method uses coefficient functions over formal power series and recovers the Frankl--Wilson bound for binary vectors. For vectors in $\{-1,0,1\}^n$, we obtain an exponential improvement over the ordinary polynomial count in a wide range; in certain regimes, we determine the lower limit of the exponential base. We use the new bounds to make progress on several questions in combinatorial geometry. In particular, we improve the base of the exponent in Borsuk's partition problem:  we construct finite subsets of $\R^N$ requiring at least $(1.37+o(1))^{\sqrt N}$ parts of smaller diameter for all sufficiently large $N$, and obtain a base of $1.3839$ along a subsequence of dimensions.
\end{abstract}

\section{Introduction}
For a fixed finite nonempty alphabet $A\subseteq\Z$ and nonnegative integer multiplicities $\mathbf{k}=(k_a)_{a\in A}$ with $\sum_{a\in A}k_a=n\ge1$, put
\[
 V(n,\mathbf{k})=\{\mathbf{x}\in A^n:\#\{i:x_i=a\}=k_a\text{ for every }a\in A\}.
\]
For an integer $t$, define
\[
 m(n,\mathbf{k},t)=\max\left\{|\mathcal F|:
 \begin{array}{l}
 \mathcal F\subseteq V(n,\mathbf{k}),\\
 \ip{\mathbf{x}}{\mathbf{y}}\ne t\quad(\mathbf{x},\mathbf{y}\in\mathcal F,\ \mathbf{x}\ne\mathbf{y})
 \end{array}\right\}.
\]
Every vector in $V(n,\mathbf{k})$ has the same squared norm $u=\sum_{a\in A}a^2k_a$.

Write $[n]=\{1,\ldots,n\}$. For $1\le k\le n$ and $L\subseteq\{0,\ldots,k-1\}$, consider families $\mathcal F\subseteq\binom{[n]}k$ such that
\[
 |E\cap F|\in L\qquad(E,F\in\mathcal F,\ E\ne F).
\]
The problem is to determine the largest possible size of such a family. Deza, Erd\H{o}s and Frankl \cite{DezaErdosFrankl1978} introduced the problem in this language and generality, although the earlier work of Ray--Chaudhuri and Wilson \cite{RayChaudhuriWilson1975}, written in the language of designs, also gives bounds for these families. These papers introduced two now-classical approaches. The $\Delta$-system method gives detailed bounds and structural results depending on $L$, for sufficiently large $n$ relative to $k$. The linear-algebraic method of Ray--Chaudhuri and Wilson gives the universal bound
\[
 |\mathcal F|\le\binom n{|L|}.
\]
It works for all $n,k$, although it is generally not optimal. Frankl and Wilson \cite{FranklWilson1981} proved a modular counterpart: if all intersections of distinct members lie in $s\le k$ residue classes modulo a prime $p$, none congruent to $k$, then $|\mathcal F|\le\binom ns$. For fixed $k$ and sufficiently large $n$, (a stronger version of) the $\Delta$-system method can also recover the polynomial exponents of such modular bounds, even without a prime number restriction; see F\"uredi's survey \cite[Section~9.6]{Furedi1991}.

For $A=\{0,1\}$, identifying a vector with the set of its unit coordinates turns scalar products into intersection sizes. We abbreviate
\[
 m(n,k,t):=m(n,(n-k,k),t)
 \qquad(0\le t<k).
\]
Thus $m(n,k,t)$ is the maximum size of a family of $k$-sets in which no two distinct members intersect in exactly $t$ elements. This is the Erd\H{o}s--S\'os forbidden-intersection problem.

For the Erd\H os--S\'os problem, combinatorial arguments give tight results in certain ranges. In an influential paper that further developed the $\Delta$-system method, Frankl and F\"uredi \cite{FranklFuredi1985} proved that, for fixed $k\ge2t+2$ and sufficiently large $n$,
\[
 m(n,k,t)=\binom{n-t-1}{k-t-1}.
\]
A largest family consists of all $k$-sets containing a fixed $(t+1)$-set. For fixed $t<k\le2t+1$, partial $t$-designs can give larger families: take all $k$-subsets of blocks of size $2k-t-1$ whose pairwise intersections have size less than $t$. This attains the order $n^t$ in the general Frankl--F\"uredi bound $O_k(n^{\max\{t,k-t-1\}})$; the optimal leading constant remains open in general. Ellis, Keller and Lifshitz \cite{EllisKellerLifshitz2024} used the junta method to resolve the problem in the non-trivial range: they showed that the largest families are actually $(t+1)$-intersecting families whenever $t$ and $\eta>0$ are fixed, $n$ is sufficiently large, and $\eta n<k<(1/2-\eta)n$. We call this range non-trivial because the extremal examples need no longer consist of all $k$-sets containing a fixed $(t+1)$-set. Spread approximations of Kupavskii and Zakharov \cite{KupavskiiZakharov2024} also give exact results for $t$ growing polynomially in $k$ and $n$ being a suitable power of $k$.

These results give tight bounds and strong structural information, but do not cover the regime in which both $k$ and $t$ are linear in $n$. This regime, however, is most important for the geometric applications. Here the Frankl--Wilson theorem \cite{FranklWilson1981} gives $m(n,k,t)\le\binom n{k-t-1}$ when $k-t$ is a prime power and $k\ge2t+1$. Bobu, Kupriyanov and Raigorodskii \cite[Section~2]{BobuKupriyanovRaigorodskii2016} showed that this bound has the correct exponential rate when $k\sim\kappa n$, $t\sim\theta n$, and $0<2\theta<\kappa\le1/2$, subject to the prime-power condition.
Frankl and R\"odl \cite{FranklRodl1987} proved a weaker bound without the arithmetic assumptions: for fixed $0<\rho<1$ and $\eta>0$, there is $\varepsilon>0$ such that
\[
 m(n,k,t)\le(1-\varepsilon)^n\binom nk
\]
whenever $n$ is sufficiently large, $k=\lfloor\rho n\rfloor$, and $(\max\{0,2\rho-1\}+\eta)n<t<(\rho-\eta)n$. The bounds that could be extracted from their argument are quantitatively very weak. Keevash and Long \cite{KeevashLong2017} subsequently derived the conclusion from the Frankl--Wilson theorem using dependent random choice (and also with quantitatively weak bounds). Kupavskii, Sagdeev and Zakharov \cite[Theorem~7]{KupavskiiSagdeevZakharov2026} subsequently used the same idea of reducing Frankl--R\"odl to Frankl--Wilson, but with a much more efficient reduction: their concatenation argument loses a factor of only three or four in the exponent, depending on whether $k-t$ is odd or even.

In the linear-algebraic method, the size of the family is bounded by the dimension of a certain linear space. The classical linear-algebraic argument of Frankl and Wilson uses inclusion matrices. Alon, Babai and Suzuki \cite{AlonBabaiSuzuki1991} developed a formulation using multilinear polynomials. Although different, the two methods work with the same space of functions: on indicator vectors $\prod_{i\in S}y_i=\mathbf{1}_{S\subseteq E}$. For systematic treatments of both languages, see Babai and Frankl \cite[Chapters~5 and~7]{BabaiFrankl2022}.

The geometric questions that motivate our work are the chromatic number of Euclidean space and Borsuk's partition problem. The former asks how many colours are needed to colour $\R^n$ without a monochromatic pair at distance $1$. The latter asks for the least number $f(N)$ of parts of smaller diameter into which every bounded subset of $\R^N$ of positive diameter can be partitioned. Borsuk's original question was whether $f(N)\le N+1$ \cite{Borsuk1933}. Kahn and Kalai disproved this and established $f(N)\ge c^{\sqrt N}$ for some $c>1$ and all sufficiently large $N$ \cite{KahnKalai1993}. Until recently, the lower bounds for both problems came from Frankl--Wilson-type arguments applied to forbidden-intersection problems on integer vectors.

Raigorodskii improved the exponents in both problems using configurations in $\{-1,0,1\}^n$ \cite{Raigorodskii1999,Raigorodskii2000,Raigorodskii2001}. With prescribed nonnegative multiplicities $k_{-1}+k_0+k_1=n$, the vectors have common squared norm $u=k_{-1}+k_1$, and $\|\mathbf{x}-\mathbf{y}\|^2=2u-2\ip{\mathbf{x}}{\mathbf{y}}$. Thus forbidding a scalar product forbids a distance. If $V(n,\mathbf{k})$ is the corresponding layer and $B$ bounds the size of every independent set in its forbidden-scalar-product graph, then $|V(n,\mathbf{k})|/B$ is a lower bound for the chromatic number. To bound the independence number of these graphs, Raigorodskii used a variant of the polynomial method.

In this paper, we introduce another language for the linear algebraic method, using coefficients of formal power series over $\F_p$, for prime $p$. For $\{0,1\}$, we recover the same Frankl--Wilson bound. For larger alphabets, the arithmetic structure of the alphabet yields exponential improvements over the ordinary monomial count recalled below, in a wide range of parameters. For a single forbidden scalar product $t$, our bound applies when $q=u-t$ is a prime power and all pairwise scalar products exceed $u-2q$. For certain limiting letter proportions and negative forbidden scalar products in the ternary case, we determine $\liminf_{n\to\infty}m(n,\mathbf{k},t)^{1/n}$. Its value agrees with the exponential base of Ahlswede--Khachatrian-type constructions from~\cite{GutermanEtAl2010,AkhiiarovBobuRaigorodskii2025}.

For arbitrary finite integer alphabets, a Frankl--Wilson-type bound for the same forbidden-scalar-product problem was established by Raigorodskii \cite[Theorem~11, p.~51]{Raigorodskii2007}. In our notation, if $q=u-t$ is a prime power and $\ip{\mathbf{x}}{\mathbf{y}}>u-2q$ for all $\mathbf{x},\mathbf{y}\in V(n,\mathbf{k})$, then
\[
 m(n,\mathbf{k},t)\le
 \#\left\{(e_1,\ldots,e_n)\in\{0,\ldots,s-1\}^n:e_1+\cdots+e_n<q\right\},
 \qquad(s=|A|).
\]
This bound counts monomials of total degree below $q$, with each variable having degree at most $s-1$. Our formal-power-series method replaces ordinary degree by arithmetic weights and yields exponential improvements over this bound for ternary vectors in a wide range of parameters.

The rest of the paper is organized as follows. Part~I starts with a proof of the binary Frankl--Wilson bound using formal power series. We then extend the method to arbitrary integer alphabets. The comparison at the end of Part~I explains why the ternary count is stronger and gives the sharpness constructions. Part~II presents the geometric applications, beginning with the bounds $f(N)\ge(1.37+o(1))^{\sqrt N}$ in all sufficiently large dimensions and the base $1.3839$ along a subsequence. Further applications concern prescribed spheres, cycle restrictions, and integer-power norms.

\part{The method}
\section{The binary case: the Frankl--Wilson bound}\label{sec:binary}
Throughout this section, $p$ is a prime and $q=p^\ell$, where $\ell\ge1$.

\begin{theorem}[Frankl--Wilson]\label{thm:binary}
Let $0\le t<k\le n$, suppose that $q=k-t$ is a prime power and $k\ge2t+1$, and let $\mathcal F\subseteq\binom{[n]}k$. If $|E\cap F|\ne t$ for all distinct $E,F\in\mathcal F$, then
\[
 |\mathcal F|\le\binom n{q-1}.
\]
\end{theorem}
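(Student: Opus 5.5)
The plan is to run the standard Frankl--Wilson polynomial argument, arranged so that it can later be rewritten in the formal-power-series language of this section. For each $F\in\mathcal F$ with indicator vector $\mathbf v_F\in\{0,1\}^n$, I will attach a function $f_F$ on $\{0,1\}^n$ that is nonzero at $\mathbf v_F$ and zero at $\mathbf v_E$ for every $E\ne F$ in $\mathcal F$. Working over $\F_p$, this makes the $f_F$ linearly independent, since evaluating a dependence at each $\mathbf v_F$ kills all but one term. Each $f_F$ will be a multilinear polynomial of degree at most $q-1$. That is too crude, because the space of multilinear polynomials of degree at most $q-1$ has dimension $\sum_{j<q}\binom nj$, not $\binom n{q-1}$. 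So the second half of the plan is a sharper dimension count.

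The arithmetic input is the following. For distinct $E,F\in\mathcal F$, write $m=|E\cap F|$, so $0\le m\le k-1$. The hypothesis $k\ge 2t+1$ gives $m>t-q=2t-k\ge \ldots$; more precisely $m> k-2q$ because $k-2q=2t-k<0\le m$ wait — I only need $m\in\{0,\dots,k-1\}\setminus\{t\}$. Then $k-m\in\{1,\dots,k\}$ and $k-m\ne q$, while $k-q=t$. Since $k\le 2q-1$ by $k\ge 2t+1$, i.e.\ $2(k-q)<k$ is equivalent to $k<2q$, the only multiple of $q$ in $\{1,\dots,k\}$ is $q$ itself. I will use the Lucas-type fact $\binom{k-m}{q}\equiv 0\pmod p$ unless $q\mid k-m$. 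Equivalently, $\binom{m-t-1}{q-1}\equiv0\pmod p$ exactly when $m\not\equiv t\pmod q$, with the value at $m=k$ being $\binom{q-1}{q-1}=1$. Concretely, I will take
\[
 f_F(\mathbf y)=\binom{\ip{\mathbf v_F}{\mathbf y}-t-1}{q-1}\in\F_p .
\]
This is a polynomial of degree $q-1$ in $\ip{\mathbf v_F}{\mathbf y}$. It equals $1$ at $\mathbf y=\mathbf v_F$ and vanishes at every $m=|E\cap F|\in[0,k-1]$ with $m\not\equiv t\pmod q$. Because $m\ge0>t-q$ and $m<k<t+2q$, the only residue-$t$ value in range is $m=t$, which is excluded. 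I will check the mod-$p$ vanishing via Lucas' theorem, using that $q$ is a power of $p$: $\binom{x}{q-1}\equiv0\pmod p$ iff $x\bmod q\ne q-1$.

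For the dimension count, I will multilinearize using $y_i^2=y_i$ on $\{0,1\}$. Each $f_F$ then lies in the span of $\mathbf y^S=\prod_{i\in S}y_i$ with $|S|\le q-1$. I will restrict to the slice $\sum_i y_i=k$. There each $\mathbf y^S$ with $|S|=j<q-1$ equals $\binom{k-j}{q-1-j}^{-1}\sum_{T\supseteq S,|T|=q-1}\mathbf y^T$, provided that binomial is invertible mod $p$. This step is the main obstacle, since the inverse may fail to exist mod $p$. If it does fail, I will instead invoke the standard fact that the inclusion map from $j$-sets to $(q-1)$-sets has full row rank over $\Q$ for $j\le q-1\le n-(q-1)$. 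For this I would work over $\Q$ with the integer-valued representatives and compare ranks, or simply use the classical Frankl--Wilson reduction: the $\binom n{q-1}$ functions $\mathbf 1_{T\subseteq E}$, $|T|=q-1$, span all $\mathbf 1_{S\subseteq E}$ with $|S|\le q-1$ over $\Q$ on the $k$-slice, since $\sum_{T\supseteq S}\mathbf 1_{T\subseteq E}=\binom{k-|S|}{q-1-|S|}\mathbf 1_{S\subseteq E}$ with a nonzero rational coefficient (here $q-1\le k$). To make this compatible with the mod-$p$ independence, I will keep integer coefficients, clear denominators, and argue that the $\F_p$-span of the reductions has dimension at most the $\Q$-rank of an integer spanning set. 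Any spanning set over $\Z[1/N]$ reduces mod $p$ only if $p\nmid N$. So the cleanest fix, which I would try first, is to avoid inversion entirely: express $f_F$ directly as $\sum_{|T|=q-1}c_T\mathbf y^T$ on the slice, by writing $\binom{\ip{\mathbf v_F}{\mathbf y}-t-1}{q-1}$ through Vandermonde as $\sum_j\binom{-t-1}{q-1-j}\binom{\ip{\mathbf v_F}{\mathbf y}}{j}$ and then lifting each $\binom{\ip{\mathbf v_F}{\mathbf y}}{j}=\sum_{|S|=j,S\subseteq F}\mathbf y^S$. The hope is that the coefficient pattern telescopes via $\binom{k-j}{q-1-j}$ relations into integer combinations of degree-exactly-$(q-1)$ monomials. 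This bookkeeping is where I expect the real work to lie.
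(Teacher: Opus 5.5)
Your independence argument is sound. In fact your $f_F$ is the paper's function: with $m=\ip{\mathbf x}{\mathbf y}$ we have $\phi_{\mathbf x}(\mathbf y)=[z^{q-1}]T^{k-1-m}=\binom{k-1-m}{q-1}=(-1)^{q-1}\binom{m-t-1}{q-1}$, and $(-1)^{q-1}=1$ in $\F_p$.

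One small point: $\binom{x-t-1}{q-1}$ has denominator $(q-1)!$, which is divisible by $p$ once $q>p$. So ``multilinearize a degree-$(q-1)$ polynomial over $\F_p$'' is not meaningful as stated. What actually places $f_F$ in $\mathcal P_{\le q-1}$ is your Vandermonde expansion $\binom{m-t-1}{q-1}=\sum_j\binom{-t-1}{q-1-j}\sum_{S\subseteq F,\,|S|=j}\mathbf y^S$, which has integer coefficients. The expansion \eqref{eq:binaryexpansion} does the same job in the paper.

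The genuine gap is the bound $\dim_{\F_p}\mathcal P_{\le q-1}\le\binom n{q-1}$ on the $k$-slice. You never establish it, and the route you say you would try first cannot work. Over $\F_p$, the monomials of degree exactly $q-1$ need not span $\mathcal P_{\le q-1}$ on the slice, and $f_F$ need not lie in their span. Take $p=q=2$, $k=2$, $t=0$, $n\ge3$; then $f_F(E)\equiv1+|E\cap F|$. Suppose $\sum_{i\in E}c_i=1+|E\cap F|$ for all $2$-sets $E$. Putting $d_i=c_i+\mathbf 1_{i\in F}$, this gives $d_i+d_j=1$ for all $i\ne j$. Summing over the three pairs in $\{1,2,3\}$ yields $0=1$ in $\F_2$. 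So no telescoping of the $\binom{k-j}{q-1-j}$ relations can produce the expression you hope for.

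The missing step is the one you state and then abandon. For an integer matrix $M$, $\operatorname{rank}_{\F_p}(M\bmod p)\le\operatorname{rank}_{\mathbb Q}M$, because a minor that is nonzero modulo $p$ is a nonzero integer. Apply this to the matrix with rows indexed by $k$-sets $E$ and columns $\mathbf 1_{S\subseteq E}$ for $|S|\le q-1$; its column space modulo $p$ is $\mathcal P_{\le q-1}$. Your identity $\sum_{T\supseteq S,\,|T|=q-1}\mathbf 1_{T\subseteq E}=\binom{k-|S|}{q-1-|S|}\mathbf 1_{S\subseteq E}$ has a positive integer coefficient because $q-1\le k$. It therefore shows the $\mathbb Q$-rank is at most $\binom n{q-1}$, and hence so is the $\F_p$-rank.

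Only the integer columns are reduced modulo $p$, never the rational relations among them. So the concern about spanning sets over $\Z[1/N]$ does not arise. This is exactly the standard fact \eqref{lem:dimension} that the paper cites from Heged\H{u}s and R\'onyai. With it, your argument coincides with the paper's proof, written with binomial coefficients rather than coefficient extraction from $T^{k-1-\ip{\mathbf x}{\mathbf y}}$.
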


The role of $k\ge2t+1$ is to make $t=k-q$ the only integer in $\{0,\ldots,k-1\}$ congruent to $k$ modulo $q$. More generally, the same bound holds whenever $q-1\le k$ and distinct members satisfy $|E\cap F|\not\equiv k\pmod q$, without the restriction $k\ge2t+1$. This is the modular form of the theorem \cite{FranklWilson1981}. For more general sets of allowed residues modulo a prime power, Babai, Frankl, Kutin and \v{S}tefankovi\v{c} \cite{BabaiFranklKutinStefankovic2001} give bounds using separating polynomials whose degrees depend on the residue pattern.

The proof uses the following standard fact. For a field $K$, let $\mathcal P_{\le d}$ be the space of functions on the $k$-uniform layer represented by polynomials of total degree at most $d$. Then
\begin{equation}\label{lem:dimension}
 \dim_K\mathcal P_{\le d}\le\binom nd\qquad(0\le d\le k\le n).
\end{equation}
For $d\le\min\{k,n-k\}$, equality holds over every field \cite[Corollary~3.1]{HegedusRonyai2003}; if $n-k<d\le k$, the full function space has dimension $\binom nk\le\binom nd$. Equivalently, the inclusion matrix whose columns are indexed by all subsets of size at most $d$ has rank at most $\binom nd$ over $K$.

\subsection{A proof using formal power series}
We work in the ring
\[
 \F_p[[z]]
 =\left\{\sum_{r\ge0}a_rz^r:a_r\in\F_p\right\}.
\]
For a series $f$, write $[z^r]f$ for its coefficient of $z^r$. Addition is coefficientwise, and multiplication is defined by
\[
 [z^r](fg)=\sum_{i=0}^r([z^i]f)([z^{r-i}]g).
\]
Each sum is finite, so no convergence is involved. A series is divisible by $z^q$ if its coefficients of degrees $0,\ldots,q-1$ vanish. Two series are congruent modulo $z^q$ if their coefficients in these degrees agree.

Put $T=1+z$. It is invertible in $\F_p[[z]]$, with $T^{-1}=1-z+z^2-\cdots$, so $T^m$ is defined for every integer $m$.

\begin{lemma}\label{lem:selector}
We have
\[
 [z^{q-1}]T^{m-1}=\begin{cases}1,&q\mid m,\\0,&q\nmid m\end{cases}
 \qquad(m\in\Z).
\]
\end{lemma}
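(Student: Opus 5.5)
The plan is to reduce the claim to the Frobenius identity in characteristic $p$. Since $q=p^\ell$, we have $(1+z)^q=1+z^q$ in $\F_p[z]$, and hence also in $\F_p[[z]]$. Applying this identity to the inverse gives $T^{-q}=(1+z^q)^{-1}$. The idea is to write $m-1$ as a multiple of $q$ plus a remainder in $\{0,\ldots,q-1\}$. Concretely, write $m=qa+b$ with $a\in\Z$ and $0\le b<q$. The two cases to handle are $b=0$ and $1\le b\le q-1$.

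First suppose $q\nmid m$, so $1\le b\le q-1$. Then
\[
T^{m-1}=(1+z^q)^{a}(1+z)^{b-1},
\]
and here $(1+z^q)^a$ makes sense for every integer $a$, because $1+z^q$ is invertible. This factor is congruent to $1$ modulo $z^q$. The factor $(1+z)^{b-1}$ is a polynomial of degree $b-1\le q-2$. Therefore the coefficients of $T^{m-1}$ in degrees below $q$ agree with those of $(1+z)^{b-1}$, and in particular $[z^{q-1}]T^{m-1}=0$.

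Now suppose $q\mid m$, so $m=qa$. Write
\[
T^{m-1}=(1+z^q)^{a}\,T^{-1}.
\]
Modulo $z^q$ this is congruent to $T^{-1}=\sum_{r\ge0}(-1)^rz^r$. So $[z^{q-1}]T^{m-1}=(-1)^{q-1}$. If $p$ is odd, then $q$ is odd and this equals $1$. If $p=2$, then $-1=1$ in $\F_2$. In either case the coefficient is $1$.

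The only point needing care is that the Frobenius identity $(1+z)^q=1+z^q$ extends to negative exponents in the power series ring, which follows from invertibility and uniqueness of inverses. This works because each series involved has constant term $1$. There is also the sign $(-1)^{q-1}$ to check, which the case split on the parity of $p$ handles. Nothing else is expected to be an obstacle.
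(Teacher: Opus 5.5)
Your proof is correct and is essentially the paper's argument: use $T^q=1+z^q$ in characteristic $p$ to reduce $T^{m-1}$ modulo $z^q$ to a power of $T$ with small exponent, and then read off the coefficient of $z^{q-1}$. The only difference is that the paper divides $m-1$ rather than $m$ by $q$. In the case $q\mid m$ it therefore lands on $T^{q-1}$, whose coefficient of $z^{q-1}$ is plainly $1$, and your sign check of $(-1)^{q-1}$ for $T^{-1}$ is not needed.
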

\begin{proof}
In characteristic $p$, $T^q=1+z^q$. Every integer power of $T^q$ is consequently congruent to $1$ modulo $z^q$, including negative powers. Write $m-1=aq+r$ with $0\le r<q$. Then $T^{m-1}\equiv T^r\pmod{z^q}$, whose coefficient of $z^{q-1}$ is $1$ exactly when $r=q-1$.
\end{proof}

\begin{proof}[Formal-power-series proof of Theorem~\ref{thm:binary}]
For $\mathbf{x}\in\mathcal F$, define a function on ${[n]\choose k}$ by
\[
 \phi_{\mathbf{x}}(\mathbf{y})=[z^{q-1}]T^{k-1-\ip{\mathbf{x}}{\mathbf{y}}}.
\]
Lemma~\ref{lem:selector} gives $\phi_{\mathbf{x}}(\mathbf{x})=1$. For distinct $\mathbf{x},\mathbf{y}\in\mathcal F$, the hypotheses imply $0<k-\ip{\mathbf{x}}{\mathbf{y}}<2q$ and $k-\ip{\mathbf{x}}{\mathbf{y}}\ne q$, so $\phi_{\mathbf{x}}(\mathbf{y})=0$. Thus these functions are linearly independent over $\F_p$.

Next, we want to find a linear space of small dimension that contains every $\phi_{\mathbf{x}}$. To this end, we need to expand $T^{-\ip{\mathbf{x}}{\mathbf{y}}}$ coordinatewise. For $S\subseteq[n]$, write $x_S=\prod_{i\in S}x_i$ and $y_S=\prod_{i\in S}y_i$. For $a,b\in\{0,1\}$, we have $T^{-ab}=1-(z/T)ab$. Multiplying over the coordinates gives
\begin{equation}\label{eq:binaryexpansion}
 T^{k-1-\ip{\mathbf{x}}{\mathbf{y}}}
 =\sum_{S\subseteq[n]}(-z)^{|S|}T^{k-1-|S|}x_Sy_S.
\end{equation}
Note that every factor $T^{k-1-|S|}$ is a series with no negative powers of $z$, even when its exponent is negative. Consequently, only the terms with $|S|<q$ can contribute to the coefficient of $z^{q-1}$. Extracting that coefficient expresses $\phi_{\mathbf{x}}$ as a linear combination over $\F_p$ of the monomials $y_S$ with $|S|\le q-1$. Thus $\phi_{\mathbf{x}}\in\mathcal P_{\le q-1}$, and \eqref{lem:dimension} bounds the number of independent functions by $\binom n{q-1}$.
\end{proof}

In the next section, we extend this construction to arbitrary integer alphabets. We count coefficient functions directly, without using the sharper dimension bound~\eqref{lem:dimension} for the binary layer.

\section{Fixed-composition integer vectors}\label{sec:general}

Let $A$, $\mathbf{k}$ and $V(n,\mathbf{k})$ be as defined in the introduction. Fix an ordering $(a_0,\ldots,a_{s-1})$ of the letters of $A$, where $s=|A|$. All vectors in $V(n,\mathbf{k})$ have the common squared norm $u=\sum_{j=0}^{s-1}k_{a_j}a_j^2$.
Consequently,
\[
 u-\ip{\mathbf{x}}{\mathbf{y}}=\tfrac12\|\mathbf{x}-\mathbf{y}\|^2>0
 \qquad(\mathbf{x},\mathbf{y}\in V(n,\mathbf{k}),\ \mathbf{x}\ne \mathbf{y}).
\]
As in the binary case, we work over $\F_p[[z]]$, with $T=1+z$ and $q=p^\ell$. The coefficient selector from Lemma~\ref{lem:selector} gives independent functions for families satisfying the hypotheses of Theorem~\ref{thm:general}. To bound their span, we expand $T^{-\ip{\mathbf{x}}{\mathbf{y}}}$ coordinatewise using Newton polynomials and count the coefficient functions that can contribute to the coefficient of $z^{q-1}$.

\subsection{Preparation for the general bound}
For a nonzero integer $d$, let $v_p(d)$ be the largest integer $v\ge0$ for which $p^v$ divides $d$, and put $\rho_p(d)=p^{v_p(d)}$. We need the following simple claim, whose proof is postponed until Section~\ref{sec32}.
\begin{claim}\label{claimdegree}
    The value $\rho_p(d)$ is the least degree of a nonzero term in $T^d-1$.
\end{claim}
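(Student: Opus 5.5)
We prove Claim~\ref{claimdegree} by factoring out the $p$-power part of $d$ with the Frobenius identity and then treating the part of $d$ prime to $p$ by a direct binomial computation. Write $d=\rho m$ with $\rho=\rho_p(d)=p^{v}$ and $p\nmid m$; here $m$ may be negative, since $d$ is any nonzero integer.

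First, in characteristic $p$ the Frobenius identity iterated $v$ times gives $T^{\rho}=(1+z)^{p^v}=1+z^{\rho}$. Hence $T^d=(1+z^{\rho})^m$, where negative $m$ is allowed because $1+z^\rho$ is invertible in $\F_p[[z]]$.

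Second, we reduce to the case $\rho=1$ by substitution. Put $w=z^\rho$. Then $T^d-1=(1+w)^m-1$, read as a series in $w$. The least degree of a nonzero term in $z$ is $\rho$ times the least degree of a nonzero term in $w$, so it suffices to show that $(1+w)^m-1$ has a nonzero coefficient of $w^1$. Its constant term vanishes, since $(1+w)^m$ has constant term $1$.

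Third, we compute the linear coefficient. For $m\ge0$, the binomial theorem gives $(1+w)^m=1+mw+O(w^2)$. For $m<0$, we use $(1+w)^{-1}=1-w+w^2-\cdots$ and raise it to the power $|m|$, which gives $(1+w)^{m}=1+mw+O(w^2)$ again. This holds formally over $\Z$ and therefore over $\F_p$. Since $p\nmid m$, the coefficient $m$ is nonzero in $\F_p$. So the least nonzero term of $(1+w)^m-1$ has degree exactly $1$ in $w$, and the least nonzero term of $T^d-1$ has degree exactly $\rho_p(d)$ in $z$.

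No step is a genuine obstacle. The one point that needs care is justifying the expansion $(1+w)^m=1+mw+O(w^2)$ for negative $m$, together with the Frobenius identity in the power-series ring. The same argument as in the proof of Lemma~\ref{lem:selector} covers the latter.
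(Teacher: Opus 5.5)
Your proof is correct and follows essentially the paper's route: write $d=p^{v}m$ with $p\nmid m$, use $T^{p^v}=1+z^{p^v}$, and read off the nonzero coefficient $m$ of $z^{p^v}$ in $(1+z^{p^v})^m-1$. The only difference is cosmetic: you treat negative $m$ directly by expanding $(1+w)^{-1}$, whereas the paper reduces to positive exponents via $T^d-1=-T^d(T^{-d}-1)$, noting that multiplying by a series with constant term $-1$ preserves the least nonzero degree.
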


For each letter $a_i$ in the alphabet, we shall have an evaluation point $T^{-a_i}\in \F_p[[z]]$. Define the \emph{Newton polynomials} in $\F_p[[z]][X]$ associated with the ordered evaluation points $T^{-a_0},\ldots,T^{-a_{s-1}}$ by
\begin{equation}\label{eq:newton}
 P_0(X)=1,\qquad
 P_j(X)=\prod_{i<j}(X-T^{-a_i})\qquad(1\le j\le s).
\end{equation}
Each $P_j$ has degree $j$ and is \emph{monic}, meaning that its leading coefficient is $1$.

If $h<j$, then $P_j(T^{-a_h})=0$ because one of its factors is zero. If $h\ge j$, all the factors are nonzero, and each has the form
\[
 T^{-a_h}-T^{-a_i}=T^{-a_h}(1-T^{a_h-a_i}).
\]
The first factor has constant term $1$, while the second has least nonzero degree $\rho_p(a_h-a_i)$. In a product of nonzero series these degrees add: the first nonzero coefficient is the product of the first nonzero coefficients of the factors. Hence the least nonzero degree of $P_j(T^{-a_h})$ is $\sum_{i<j}\rho_p(a_h-a_i)$.

Summarizing, the largest power of $z$ dividing every value of $P_j$ at the evaluation points is $z^{w_j}$, where
\begin{equation}\label{eq:weights}
 w_0=0,\qquad
 w_j=\min_{j\le h<s}\sum_{i<j}\rho_p(a_h-a_i)
 \qquad(1\le j<s).
\end{equation}
For $I=(i_1,\ldots,i_n)\in\{0,\ldots,s-1\}^n$, put
\[
 W(I)=\sum_{r=1}^n w_{i_r},\qquad
 H_{I}(\mathbf{y})=\prod_{r=1}^nP_{i_r}(T^{-y_r}).
\]
Every value of $H_{I}$ is divisible by $z^{W(I)}$. Hence only indices with $W(I)<q$ can contribute to the coefficient of $z^{q-1}$.

The prescribed multiplicities impose a further restriction on which products can be nonzero on our layer. Write $y_r=a_{j_r}$. If $H_{I}(\mathbf{y})\ne0$, the zero pattern of the Newton polynomials implies $i_r\le j_r$ for every coordinate. Consequently,
\[
 \begin{aligned}
 \{r:i_r\ge h\}&\subseteq\{r:j_r\ge h\},\\
 \#\{r:j_r\ge h\}&=\sum_{j=h}^{s-1}k_{a_j}
 \end{aligned}
 \qquad(1\le h<s).
\]
Thus a nonzero product requires $I$ to belong to the collection
\begin{equation}\label{eq:admissible}
 \mathcal I=\left\{I\in\{0,\ldots,s-1\}^n:
 \#\{r:i_r\ge h\}\le\sum_{j=h}^{s-1}k_{a_j}
 \quad(1\le h<s)\right\}.
\end{equation}
We are now ready to formulate our main theorem.
\begin{theorem}\label{thm:general}
Let $q=p^\ell$, and let $\mathcal F\subseteq V(n,\mathbf{k})$ satisfy
\[
 \begin{aligned}
 \ip{\mathbf{x}}{\mathbf{y}}&>u-2q
 &&(\mathbf{x},\mathbf{y}\in\mathcal F),\\
 \ip{\mathbf{x}}{\mathbf{y}}&\ne u-q
 &&(\mathbf{x},\mathbf{y}\in\mathcal F,\ \mathbf{x}\ne \mathbf{y}).
 \end{aligned}
\]
With the weights and the collection $\mathcal I$ defined above,
\begin{equation}\label{eq:general-bound}
 \begin{aligned}
 |\mathcal F|&\le C_{A,\mathbf{k}}(q):=\sum_{I\in\mathcal I}(q-W(I))_+\\
 &\le C_A(n,q):=\sum_{I\in\{0,\ldots,s-1\}^n}(q-W(I))_+,
 \end{aligned}
\end{equation}
where $(t)_+=\max\{t,0\}$.
\end{theorem}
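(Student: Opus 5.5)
We follow the binary proof. For $\mathbf{x}\in\mathcal F$ we define a function on $V(n,\mathbf{k})$ by
\[
 \phi_{\mathbf{x}}(\mathbf{y})=[z^{q-1}]T^{u-1-\ip{\mathbf{x}}{\mathbf{y}}}.
\]
Lemma~\ref{lem:selector} with $m=u-\ip{\mathbf{x}}{\mathbf{y}}$ gives $\phi_{\mathbf{x}}(\mathbf{x})=1$, since $m=0$. For distinct $\mathbf{x},\mathbf{y}\in\mathcal F$ we have $0<m<2q$ and $m\ne q$, so $q\nmid m$ and $\phi_{\mathbf{x}}(\mathbf{y})=0$. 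The matrix $(\phi_{\mathbf{x}}(\mathbf{y}))_{\mathbf{x},\mathbf{y}\in\mathcal F}$ is therefore the identity, and the $\phi_{\mathbf{x}}$ are linearly independent over $\F_p$. It remains to exhibit a space of functions on $V(n,\mathbf{k})$ of dimension at most $C_{A,\mathbf{k}}(q)$ containing every $\phi_{\mathbf{x}}$.

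\textbf{Expansion.} Write $T^{u-1-\ip{\mathbf{x}}{\mathbf{y}}}=T^{u-1}\prod_{r=1}^n (T^{-y_r})^{x_r}$. For fixed $x_r\in A$, consider the polynomial $X^{x_r}$, or, if $x_r<0$, any polynomial in $\F_p[[z]][X]$ of degree $<s$ that agrees with $X^{x_r}$ at the $s$ evaluation points. Such a polynomial can be obtained by Lagrange/Newton interpolation at the points $T^{-a_i}$, or more simply by the Newton divided-difference expansion. Expand it in the monic Newton basis $P_0,\dots,P_{s-1}$ as $\sum_j c_j(x_r)P_j(X)$. The delicate point is that the coefficients $c_j$ must lie in $\F_p[[z]]$ with no negative powers of $z$. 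Divided differences divide by $T^{-a_h}-T^{-a_i}$, which is not invertible in $\F_p[[z]]$.

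To avoid this, we evaluate only at points and use the triangular structure. Since $P_j(T^{-a_h})=0$ for $h<j$ and the values are divisible by $z^{w_j}$, the identity $X^{x_r}=\sum_j c_jP_j$ at the evaluation points is solved by forward substitution in $h$. Each $c_j$ is then a Laurent series with denominator dividing the leading value $P_j(T^{-a_j})$, of order $\sum_{i<j}\rho_p(a_j-a_i)\ge w_j$. Rather than control $c_j$, we work with the functions $\mathbf{y}\mapsto c\cdot H_I(\mathbf{y})$ over $\F_p((z))$. Multiplying out gives
\[
 T^{u-1-\ip{\mathbf{x}}{\mathbf{y}}}=\sum_{I}c_I(\mathbf{x})\,H_I(\mathbf{y}),
 \qquad c_I(\mathbf{x})=T^{u-1}\prod_r c_{i_r}(x_r).
\]
The terms with $I\notin\mathcal I$ vanish identically on $V(n,\mathbf{k})$ by the zero-pattern argument before the theorem, so we drop them.

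\textbf{Counting.} We would like $H_I(\mathbf{y})=z^{W(I)}G_I(\mathbf{y})$ with $G_I$ taking values in $\F_p[[z]]$, together with $c_I(\mathbf{x})\in\F_p[[z]]$. Then
\[
 [z^{q-1}]\bigl(c_IH_I\bigr)=\sum_{e=0}^{q-1-W(I)}\bigl([z^{e}]c_I(\mathbf{x})\bigr)\,[z^{q-1-e}]H_I(\mathbf{y}),
\]
which places $\phi_{\mathbf{x}}$ in the span of the functions $\mathbf{y}\mapsto[z^{d}]H_I(\mathbf{y})$ with $W(I)\le d\le q-1$. There are exactly $(q-W(I))_+$ such functions for each $I\in\mathcal I$. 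This gives $|\mathcal F|\le C_{A,\mathbf{k}}(q)$, and $\mathcal I\subseteq\{0,\dots,s-1\}^n$ gives the second inequality.

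\textbf{Main obstacle.} The main obstacle is integrality of the coefficients $c_j(a)$ in the Newton expansion of $X^{a}$. For integer exponents $a$ this should hold: the divided differences of $X^a$ on the points $T^{-a_i}$ are complete homogeneous-type symmetric polynomials in those points, or their analogues for negative $a$ via $X^{-1}$, which is a unit here. Such polynomials involve no division. Concretely, $X^a-X^b$ is divisible by $X-Y$ as a Laurent polynomial identity, so iterated divided differences are Laurent polynomials in the $T^{-a_i}$, which are units of $\F_p[[z]]$. I would prove this by induction on $j$, using the identity $(f(X)-f(Y))/(X-Y)\in\F_p[X^{\pm1},Y^{\pm1}]$ for $f(X)=X^a$. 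This gives $c_j(a)\in\F_p[[z]]$ and completes the argument. Claim~\ref{claimdegree} is used only to justify the weight computation \eqref{eq:weights} already made.
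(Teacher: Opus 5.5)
Your proof is correct. Its skeleton matches the paper's: the selector functions $\phi_{\mathbf{x}}$, the coordinatewise Newton expansion, discarding $I\notin\mathcal I$, and counting the functions $[z^d]H_I$ with $W(I)\le d\le q-1$. Where you differ is in how you show $c_j(b)\in\F_p[[z]]$ and how you handle negative letters.

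The paper first assumes all letters are nonnegative, so that $X^b$ is an honest polynomial. It divides $X^b$ by the monic $P_s$ and expands the remainder in the monic basis $P_0,\ldots,P_{s-1}$; monicity alone keeps every coefficient in $\F_p[[z]]$. It then reduces an arbitrary alphabet to this case by the translation $\mathbf{x}\mapsto\mathbf{x}-a_{\min}\mathbf{1}$. It checks that $u-\ip{\mathbf{x}}{\mathbf{y}}$, the weights $w_j$ and $\mathcal I$ are unchanged, which uses that all vectors of the layer have the same coordinate sum.

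You instead use Newton interpolation with remainder over Laurent polynomials. Iterated quotients $(g(X)-g(x_j))/(X-x_j)$ of Laurent polynomials are again Laurent polynomials, so each $c_j(b)$ is a Laurent polynomial in the nodes $T^{-a_i}$. These nodes are units of $\F_p[[z]]$, so $c_j(b)\in\F_p[[z]]$, and the remainder term vanishes when evaluated at a node. This treats all integer letters uniformly and makes the translation step unnecessary. The price is the Laurent divided-difference induction. State it for arbitrary Laurent polynomials $g$, not just $X^a$, since the inductive step applies it to the intermediate quotients $g_j$; the general case follows from the monomial case by linearity.

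One clean-up: delete the intermediate suggestion of working with Laurent coefficients over $\F_p((z))$ after forward substitution. If $c_I(\mathbf{x})$ had negative powers of $z$, coefficients $[z^d]H_I$ with $d\ge q$ would enter $\phi_{\mathbf{x}}$, and the bound $(q-W(I))_+$ on contributing functions would fail. Your final paragraph supplies exactly the integrality the argument needs, so that detour is both unnecessary and, taken on its own, insufficient.
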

Dropping the composition restrictions gives the following convenient corollary. 
\begin{corollary}\label{cor:optimization}
Put $Z_A(\tau)=\sum_{j=0}^{s-1}\tau^{w_j}$. Under the hypotheses of Theorem~\ref{thm:general},
\begin{equation}\label{eq:tau-bound}
 |\mathcal F|\le C_A(n,q)\le q\tau^{-q}Z_A(\tau)^n
 \qquad(0<\tau\le1).
\end{equation}
\end{corollary}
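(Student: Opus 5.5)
The first inequality $|\mathcal F|\le C_A(n,q)$ is the second inequality of Theorem~\ref{thm:general}, so only the bound $C_A(n,q)\le q\tau^{-q}Z_A(\tau)^n$ needs proof. This is a Chernoff-type (Rankin) estimate.

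First observe that $0\le (q-W)_+\le q\cdot\mathbf 1_{W<q}$, because $W(I)\ge0$ for every $I$. Next, for $0<\tau\le1$ and $W<q$ we have $\tau^{W-q}\ge1$, and the right-hand side is always nonnegative. Hence
\[
 \mathbf 1_{W<q}\le \tau^{W-q}\qquad(W\ge0,\ 0<\tau\le1).
\]
Combining the two,
\[
 C_A(n,q)=\sum_{I\in\{0,\ldots,s-1\}^n}(q-W(I))_+\le q\tau^{-q}\sum_{I}\tau^{W(I)}.
\]

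It remains to evaluate the last sum. Since $W(I)=\sum_{r=1}^n w_{i_r}$ is additive over coordinates, $\tau^{W(I)}=\prod_{r=1}^n\tau^{w_{i_r}}$. Summing over all $I\in\{0,\ldots,s-1\}^n$, the sum factorizes as
\[
 \sum_{I}\tau^{W(I)}=\Bigl(\sum_{j=0}^{s-1}\tau^{w_j}\Bigr)^n=Z_A(\tau)^n.
\]
This gives \eqref{eq:tau-bound}.

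There is no real obstacle. The only points needing care are the nonnegativity of the weights $w_j$, which holds since $\rho_p\ge1$ and $w_0=0$, and the restriction $\tau\le1$, which is exactly what makes the indicator bound valid.
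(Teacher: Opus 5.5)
Your proof is correct and is essentially the paper's argument. The paper uses the single pointwise bound $(q-t)_+\le q\tau^{t-q}$ for $t\ge0$ and $0<\tau\le1$, which you split into two steps. It then sums over $I$ and factorizes via the additivity of $W(I)$ to get $q\tau^{-q}Z_A(\tau)^n$, exactly as you do.
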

\begin{proof}[Proof of Corollary~\ref{cor:optimization}]
For every $t\ge0$ and $0<\tau\le1$, we have $(q-t)_+\le q\tau^{t-q}$. Summing over $I$ and using $W(I)=\sum_r w_{i_r}$ gives
\[
 C_A(n,q)\le q\tau^{-q}\sum_{I}\tau^{W(I)}
 =q\tau^{-q}Z_A(\tau)^n.
\]
\end{proof}

\subsection{Proof of the general bound}\label{sec32}
\begin{proof}[Proof of Claim~\ref{claimdegree}]If $d>0$, write $d=p^vb$ with $p\nmid b$. The identity $T^{p^v}=1+z^{p^v}$ valid in characteristic $p$ gives
\[
 T^d-1=(1+z^{p^v})^b-1
 =b z^{p^v}+\text{terms of higher degree}.
\]
The displayed coefficient is nonzero in $\F_p$. If $d<0$, use
\[
 T^d-1=-T^{d}(T^{-d}-1).
\]
The series $-T^{d}$ has nonzero constant term $-1$. Multiplying by it preserves the least nonzero degree: the coefficient at that degree is multiplied by $-1$, and no lower-degree term is introduced. Since $v_p(-d)=v_p(d)$, the same formula for the least degree follows for negative $d$.\end{proof}

\begin{proof}[Proof of Theorem~\ref{thm:general}]
We first prove the result when all letters of $A$ are nonnegative; the reduction from an arbitrary integer alphabet is given at the end. For $\mathbf{x}\in\mathcal F$, define
\[
 \phi_{\mathbf{x}}(\mathbf{y})=[z^{q-1}]T^{u-1-\ip{\mathbf{x}}{\mathbf{y}}}\qquad(\mathbf{y}\in V(n,\mathbf{k})).
\]
For distinct $\mathbf{x},\mathbf{y}\in\mathcal F$, we have $0<u-\ip{\mathbf{x}}{\mathbf{y}}<2q$. The only multiple of $q$ in this interval is $q$, and that value is excluded by hypothesis. Lemma~\ref{lem:selector} therefore gives $\phi_{\mathbf{x}}(\mathbf{y})=0$, whereas $\phi_{\mathbf{x}}(\mathbf{x})=1$. These functions are linearly independent on $\mathcal F$. We shall place them in a common space with at most $\sum_{I\in\mathcal I}(q-W(I))_+$ generators.

For each letter $b\in A$, we now expand the polynomial $X^b\in \F_p[[z]][X]$ in the Newton basis. First divide by the monic polynomial $P_s$ to obtain a remainder of degree less than $s$. Then expand that remainder in $P_0,\ldots,P_{s-1}$. Explicitly,
\begin{equation}\label{eq:newton-division}
 X^b=Q_b(X)P_s(X)+\sum_{j=0}^{s-1}c_j(b)P_j(X),
 \qquad(Q_b\in \F_p[[z]][X],\ c_j(b)\in \F_p[[z]]).
\end{equation}
To perform the first division, a leading term $CX^d$ with $d\ge s$ is removed by subtracting $CX^{d-s}P_s(X)$. In the remainder, a leading term $CX^j$ is removed by subtracting $CP_j(X)$. Each operation lowers the degree and keeps all coefficients in $\F_p[[z]]$, because each divisor has leading coefficient~$1$.

Now evaluate \eqref{eq:newton-division} at $X=T^{-a}$, where $a\in A$. The factor $P_s(T^{-a})$ is zero, so
\begin{equation}\label{eq:one-coordinate}
 T^{-ab}=\sum_{j=0}^{s-1}c_j(b)P_j(T^{-a}).
\end{equation}

Apply \eqref{eq:one-coordinate} with $a=y_r$ and $b=x_r$ in each coordinate, then multiply by $T^{u-1}$. We obtain
\begin{equation}\label{eq:general-product}
 T^{u-1-\ip{\mathbf{x}}{\mathbf{y}}}=\sum_{I} b_{I}(\mathbf{x})H_{I}(\mathbf{y}),
 \qquad b_{I}(\mathbf{x})=T^{u-1}\prod_{r=1}^n c_{i_r}(x_r)\in \F_p[[z]],
\end{equation}
where the sum is over $I\in\{0,\ldots,s-1\}^n$. By the argument preceding the theorem, $H_{I}$ vanishes on $V(n,\mathbf{k})$ when $I\notin\mathcal I$. When $W(I)\ge q$, its values are divisible by $z^q$. Since $b_{I}(\mathbf{x})\in \F_p[[z]]$ has no negative powers of $z$, these terms contribute nothing to $\phi_{\mathbf{x}}$ either.

For $I\in\mathcal I$ and $e\ge0$, put $f_{I,e}(\mathbf{y})=[z^e]H_{I}(\mathbf{y})$. These functions vanish for $e<W(I)$. Extracting the coefficient of $z^{q-1}$ in \eqref{eq:general-product} gives
\begin{equation}\label{eq:convolution}
 \phi_{\mathbf{x}}(\mathbf{y})=\sum_{I\in\mathcal I}\ \sum_{e=W(I)}^{q-1}
 \bigl([z^{q-1-e}]b_{I}(\mathbf{x})\bigr)f_{I,e}(\mathbf{y}).
\end{equation}
The multiplier of each $f_{I,e}$ is a scalar in $\F_p$ depending only on $\mathbf{x}$. All the functions $\phi_{\mathbf{x}}$ therefore lie in the span of  $f_{I,e}$. There are at most $q-W(I)$ such functions for an index with $W(I)<q$, and none for the other indices. Their number bounds the dimension of the span, while the independent functions $\phi_{\mathbf{x}}|_{\mathcal F}$ give a lower bound of $|\mathcal F|$ for that dimension. This proves the first inequality in \eqref{eq:general-bound} for nonnegative alphabets. Adding the nonnegative summands with $I\notin\mathcal I$ proves the second.


Finally, let $A$ be any integer alphabet, put $a_{\min}=\min A$, and replace each vector $\mathbf{x}$ by $\mathbf{x}'=\mathbf{x}-a_{\min}\mathbf{1}$. The translated alphabet has nonnegative letters. Every original vector has the same coordinate sum $s_1=\sum_jk_{a_j}a_j$, so its translated squared norm and the translated scalar products satisfy
\[
 u'=u-2a_{\min}s_1+na_{\min}^2,\qquad
 \ip{\mathbf{x}'}{\mathbf{y}'}=\ip{\mathbf{x}}{\mathbf{y}}-2a_{\min}s_1+na_{\min}^2.
\]
Hence $u'-\ip{\mathbf{x}'}{\mathbf{y}'}=u-\ip{\mathbf{x}}{\mathbf{y}}$: both hypotheses of the theorem are preserved. The ordered multiplicities and all differences between letters are unchanged, so the weights $w_j$ and the collection $\mathcal I$ are unchanged as well. Translation is a bijection on the family, and the bound just proved for the translated alphabet gives \eqref{eq:general-bound} for the original one.
\end{proof}

It is easy to see that the same bound holds under the direct modular hypothesis $\ip{\mathbf{x}}{\mathbf{y}}\not\equiv u\pmod q$ for distinct $\mathbf{x},\mathbf{y}\in\mathcal F$. Indeed, the window and forbidden-value hypotheses were used only to establish this congruence condition and hence the independence of the coefficient functions.

\section{The ternary case and sharpness}\label{sec:comparison}

\subsection{The ternary bound}

Let the alphabet be ordered as $(0,1,-1)$, let $q$ be a power of two, and put $u=k_{-1}+k_1$. Suppose that distinct members of $\mathcal F$ avoid scalar product $u-q$ and that all pairwise scalar products exceed $u-2q$. The Newton polynomials from Section~\ref{sec:general} are
\[
 P_0(X)=1,\qquad P_1(X)=X-1,\qquad
 P_2(X)=(X-1)(X-T^{-1}).
\]
The arithmetic gain comes from the two letters $1$ and $-1$, whose difference is divisible by $2$. Their evaluation points are closer as formal series than either is to $1$: in characteristic two,
\[
 T-T^{-1}=\frac{T^2-1}{T}=\frac{z^2}{T},
 \qquad
 P_2(T)=(T-1)(T-T^{-1})=\frac{z^3}{T}.
\]
The polynomial $P_2$ vanishes at the other two evaluation points $1,T^{-1}$. Its values are therefore all divisible by $z^3$, although its degree in $X$ is only two. Together with $P_0$ and $P_1$, this gives the weights $0,1,3$.

Consider a product containing $i$ factors of type $P_1$ and $j$ factors of type $P_2$. Since $P_2(T^{-y_r})$ is nonzero only when $y_r=-1$, such a product vanishes on the whole layer if $j>k_{-1}$. Similarly, both nonconstant factors vanish when $y_r=0$, so the product also vanishes if $i+j>k_{-1}+k_1$.

Each remaining product has weight $i+3j$ and contributes at most $q-i-3j$ coefficient functions when $i+3j<q$, and none otherwise. Theorem~\ref{thm:general} bounds $|\mathcal F|$ by the total number of these functions. Including the products that vanish on the layer only increases this upper bound. There are $n!/(i!j!(n-i-j)!)$ ways to assign the factor types to coordinates, so this gives
\begin{equation}\label{eq:ternary-unrestricted-count}
 |\mathcal F|\le
 C_{\{-1,0,1\}}(n,q)=
 \sum_{\substack{i,j\ge0,\ i+j\le n\\i+3j<q}}
 (q-i-3j)\frac{n!}{i!j!(n-i-j)!}.
\end{equation}
We use this simpler bound to compare the weights $0,1,3$ with the ordinary degrees $0,1,2$. The ordinary polynomial count gives
\begin{equation}\label{eq:oldcount}
 |\mathcal F|\le D_3(n,q):=
 \sum_{\substack{i,j\ge0,\ i+j\le n\\i+2j<q}}
 \frac{n!}{i!j!(n-i-j)!};
\end{equation}
see~\cite{GutermanEtAl2010,AkhiiarovBobuRaigorodskii2025}. Replacing $i+2j<q$ by $i+3j<q$ excludes more products from the count. For each retained product, the factor $q-i-3j$ lies between $1$ and $q$, so the weighted sum differs from the number of products satisfying the new cutoff by at most a factor $q$. When $q$ is proportional to $n$, this factor does not affect the exponential rate.

More precisely, if $q/n\to\rho>0$, then
\[
 \begin{aligned}
 \lim_{n\to\infty}D_3(n,q)^{1/n}
 &=\min_{0<\tau\le1}\frac{1+\tau+\tau^2}{\tau^\rho},\\
 \lim_{n\to\infty}C_{\{-1,0,1\}}(n,q)^{1/n}
 &=\min_{0<\tau\le1}\frac{1+\tau+\tau^3}{\tau^\rho}.
 \end{aligned}
\]
To see why these are the exact rates, each sum has only polynomially many pairs $(i,j)$ and hence has the rate of its largest multinomial term. By Stirling's formula, for weights $0,1,d$ this rate is
\[
 \max_{\substack{\lambda_0,\lambda_1,\lambda_2\ge0\\
                  \lambda_0+\lambda_1+\lambda_2=1\\
                  \lambda_1+d\lambda_2\le\rho}}
 \prod_{r=0}^2\lambda_r^{-\lambda_r}
 =\min_{0<\tau\le1}\frac{1+\tau+\tau^d}{\tau^\rho},
 \qquad(d\in\{2,3\}),
\]
with the convention $0^0=1$. If the uniform proportions are feasible, they maximize the product and the minimum on the right is attained at $\tau=1$. Otherwise, the maximizing proportions are proportional to $1,\tau,\tau^d$, with $\tau<1$ chosen so that their mean weight is $\rho$. Boundary proportions can be approached from below in weight, so the strict cutoff in the finite sums gives the same rate.

For $0<\rho<1$, the minimum for the ordinary count is attained at $\tau<1$. At that value, replacing $\tau^2$ by $\tau^3$ strictly decreases the expression, so the new count has a smaller exponential rate. For $1\le\rho<4/3$, the ordinary rate is $3$, whereas the new rate is less than $3$. Thus the arithmetic weights alone exponentially improve the unrestricted count throughout $0<\rho<4/3$.

In the next subsection, we identify a dominant profile that is also realized by a lower-bound construction.

\subsection{Sharpness for negative scalar products}\label{sec:sharp}

We work with $A=\{-1,0,1\}$ in the proportional regime
$k_a\sim k'_a n$, $t\sim t'n$ as $n\to\infty$, where
$k'_a>0$ and $\sum_{a\in A}k'_a=1$.
The lower limit in the theorem below is taken over all integer parameters with these limiting proportions.

Put $q=u-t$. The composition requires $i+j\le u$, since both nonconstant Newton factors vanish at the letter $0$. To identify a candidate dominant profile, consider the regime in which $i+j\le u$ and $i+3j<q$ are both tight to first order. Write $\nu_1$ for the limiting proportion $i/n$, $\nu_{-1}$ for $j/n$, and $\nu_0$ for the remaining proportion $(n-i-j)/n$. Then
\[
 \nu_1+\nu_{-1}=k_{-1}'+k_1',\qquad
 \nu_1+3\nu_{-1}=k_{-1}'+k_1'-t'.
\]
Solving these equations gives
\begin{equation}\label{eq:sharp-proportions}
 \nu_{-1}=-\frac{t'}2,\qquad
 \nu_0=k_0',\qquad
 \nu_1=k_{-1}'+k_1'+\frac{t'}2.
\end{equation}
Thus $(\nu_0,\nu_1,\nu_{-1})$ describes the proportions of the factors $P_0,P_1,P_2$ in the candidate dominant term. The proof below uses this profile both in the upper estimate and as the letter frequencies of a two-block construction.

\begin{theorem}\label{thm:ternarysharp}
Suppose that $k_1'\ge k_{-1}'$ and $(k_1')^3<k_{-1}'(k_0')^2$. Let $-2k_{-1}'\le t'<0$, and define $\nu_{-1},\nu_0,\nu_1$ by~\eqref{eq:sharp-proportions}.
If
\begin{equation}\label{eq:sharp-threshold}
 \nu_1^3\le\nu_{-1}\nu_0^2,
\end{equation}
then
\begin{equation}\label{eq:sharp-rate}
 \liminf_{n\to\infty}\sqrt[n]{m(n,\mathbf{k},t)}
 =\frac{1}{\nu_{-1}^{\nu_{-1}}\nu_0^{\nu_0}\nu_1^{\nu_1}}.
\end{equation}
\end{theorem}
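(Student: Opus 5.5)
The plan is to prove the two inequalities for the $\liminf$ separately. The upper estimate comes from Theorem~\ref{thm:general}, applied along a suitable subsequence. The lower estimate comes from an explicit two-block family. Write $u'=k_1'+k_{-1}'$ and $H(\nu)=-\sum_a\nu_a\log\nu_a$, so the right-hand side of \eqref{eq:sharp-rate} is $e^{H(\nu)}$.

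\emph{Upper bound.} Since the statement is a $\liminf$, it suffices to exhibit one admissible sequence of parameters along which $m(n,\mathbf k,t)^{1/n}\to e^{H(\nu)}$ from above. I would choose $n$ and integer $k_a$, $t$ with the prescribed limiting proportions so that $q=u-t$ is a power of two. This is possible because powers of two are dense on a logarithmic scale: pick $q=2^\ell$, set $n\approx q/(u'-t')$, and round the $k_a$ and $t$ accordingly. The window hypothesis of Theorem~\ref{thm:general} is automatic. Every scalar product on the layer is at least $-u$, while $u-2q=2t-u<-u$ because $t<0$. Hence $|\mathcal F|\le C_{A,\mathbf k}(q)$ for every admissible family. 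Next I would bound $C_{A,\mathbf k}(q)$ with the ternary restrictions $j\le k_{-1}$ and $i+j\le u$ from Section~\ref{sec:comparison}. There are polynomially many pairs $(i,j)$, so by Stirling the rate is $\exp\max H(\lambda)$. The maximum is over distributions $\lambda$ with $\lambda_{-1}\le k_{-1}'$, $\lambda_1+\lambda_{-1}\le u'$ and $\lambda_1+3\lambda_{-1}\le u'-t'$.

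The core step is to show that this concave program is maximized at $\lambda=\nu$. I would verify the KKT conditions. With multipliers $\alpha$ for the weight constraint and $\beta$ for $i+j\le u$, the stationary point has $\lambda_0:\lambda_1:\lambda_{-1}=1:e^{-\alpha-\beta}:e^{-3\alpha-\beta}$. Matching this to $\nu$ gives $e^{-2\alpha}=\nu_{-1}/\nu_1$ and $e^{-\beta}=\nu_1/(\nu_0e^{-\alpha})$. Then $\alpha\ge0$ is equivalent to $\nu_{-1}\le\nu_1$, which follows from $t'\ge-2k_{-1}'$ and $k_1'\ge k_{-1}'$. Also $\beta\ge0$ is equivalent to $\nu_1/\nu_0\le\sqrt{\nu_{-1}/\nu_1}$, i.e.\ exactly the threshold \eqref{eq:sharp-threshold}. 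The remaining constraint $\lambda_{-1}\le k_{-1}'$ is slack or tight with multiplier $0$, since $\nu_{-1}=-t'/2\le k_{-1}'$. By concavity, the KKT point is the global maximizer, giving $\limsup m^{1/n}\le e^{H(\nu)}$ along this sequence. The hypothesis $(k_1')^3<k_{-1}'(k_0')^2$ should enter here only to ensure that the constraints are consistent, i.e.\ that the unconstrained optimum is not already inside the region; I would check this separately.

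\emph{Lower bound.} For every sequence with the given proportions, I would build a family in $V(n,\mathbf k)$ avoiding scalar product $t$ with size $e^{(H(\nu)-o(1))n}$. I would follow the two-block Ahlswede--Khachatrian-type construction of \cite{GutermanEtAl2010,AkhiiarovBobuRaigorodskii2025}, choosing the free letter frequencies to be $\nu$ as the paper indicates. Since $\nu_0=k_0'$ and $\nu_1+\nu_{-1}=u'$ with $\nu_{-1}\le k_{-1}'$, the zero pattern can range freely. Among the nonzero positions, a sub-block's signs are correlated so that the composition is exactly $\mathbf k$. At the same time, all pairwise scalar products stay on one side of $t$, for instance all exceed $t$, because the forced agreements contribute enough positive mass.

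I expect this construction to be the main obstacle: matching the exact composition $\mathbf k$ while keeping the entropy $H(\nu)$, and proving that the minimum pairwise scalar product exceeds $t$. I would first try a fixed "anchor" block on which all vectors agree, placing the surplus $k_{-1}'-\nu_{-1}$ of $(-1)$'s paired with $+1$'s. I would then compute the worst-case free-block product and check that the extremal configuration gives exactly $t'n$ to first order.
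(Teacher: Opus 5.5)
\textbf{Upper bound.} Your upper-bound half is correct and is the paper's argument in KKT form. Your multipliers give $e^{-\alpha}=\sqrt{\nu_{-1}/\nu_1}$ and $e^{-\beta}=\sqrt{\nu_1^3/(\nu_{-1}\nu_0^2)}$. These are exactly the paper's $\tau$ and $\eta$ in its weighted AM--GM bound $(1+\eta\tau+\eta\tau^3)\tau^{-q'}\eta^{-u'}$. Your extra constraint $j\le k_{-1}$ is harmless but unnecessary. The hypothesis $(k_1')^3<k_{-1}'(k_0')^2$ plays no role in the proof. Since $\nu_1^3/\nu_{-1}$ increases with $t'$, \eqref{eq:sharp-threshold} already implies it (non-strictly), so it only records that the admissible range of $t'$ is nonempty.

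\textbf{Lower bound: the gap.} You leave the construction as the ``main obstacle'', and the mechanism you sketch cannot reach the sharp base. You propose an anchor block on which all members agree, with forced agreements supplying positive mass. But coordinates fixed across the family contribute nothing to the count. Suppose the anchor consists of the surplus $\gamma n=(k'_{-1}-\nu_{-1})n$ minus-ones and the other coordinates range over their sub-layer. The exponent is then $H(\nu)+(1-\gamma)\log(1-\gamma)+\nu_1\log\nu_1-k'_1\log k'_1$, which is strictly below $H(\nu)$ for $\gamma>0$. For example, with $(k'_{-1},k'_0,k'_1)=(0.2,0.6,0.2)$ and $t'=-0.2$ it is about $0.764$, against $H(\nu)\approx0.898$.

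The missing idea is the sign swap, and the paper's construction works as follows.
\begin{itemize}
\item Take two blocks of sizes $\theta n$ and $(1-\theta)n$, with $\theta=(k'_1-\nu_{-1})/(\nu_1-\nu_{-1})$.
\item In the first block allow all arrangements with proportions $\nu_{-1},\nu_0,\nu_1$ of the letters $-1,0,1$. In the second allow the swapped proportions $\nu_1,\nu_0,\nu_{-1}$.
\item This family has composition $\mathbf k$ and fixes no coordinate. Its exponent is exactly $H(\nu)$, because entropy is invariant under the swap.
\item The scalar-product bound comes from counting rare signs, not agreements. A coordinate with $x_iy_i=-1$ uses a rare sign of $\mathbf x$ or of $\mathbf y$: a $-1$ in the first block, a $+1$ in the second. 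Hence $\langle\mathbf x,\mathbf y\rangle\ge-2\nu_{-1}n+o(n)=t'n+o(n)$.
\item Lowering the rare-letter counts by $o(n)$ then makes every scalar product strictly exceed $t$, for each $n$.
\end{itemize}
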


\begin{proof}
\textbf{Lower bound: the Ahlswede--Khachatrian construction.}
Use the construction from~\cite{GutermanEtAl2010,AkhiiarovBobuRaigorodskii2025} with two blocks of relative sizes $\theta$ and $1-\theta$. In each block allow all arrangements with the following letter proportions:
\[
 \begin{array}{c|ccc}
  &-1&0&1\\\hline
  \text{first block}&\nu_{-1}&\nu_0&\nu_1\\
  \text{second block}&\nu_1&\nu_0&\nu_{-1}
 \end{array}
\]
Choose
\[
 \theta=\frac{k_1'-\nu_{-1}}{\nu_1-\nu_{-1}}
 \qquad(\nu_1>\nu_{-1}).
\]
The assumptions $\nu_{-1}\le k_{-1}'\le k_1'\le\nu_1$ give $0\le\theta\le1$. If $\nu_1=\nu_{-1}$, take $\theta=1/2$. In either case,
\[
 \theta\nu_1+(1-\theta)\nu_{-1}=k_1',\qquad
 \theta\nu_{-1}+(1-\theta)\nu_1=k_{-1}',
\]
so the blocks together have the required composition.

Every negative coordinate product in the first block uses a minus sign from one of the two vectors; in the second block it uses a plus sign. The two blocks therefore contribute at least $-2\theta\nu_{-1}n+o(n)$ and $-2(1-\theta)\nu_{-1}n+o(n)$, respectively. Hence the resulting family $\mathcal B$ satisfies
\[
 \begin{aligned}
 \frac{\ip{\mathbf{x}}{\mathbf{y}}}{n}
 &\ge -2\theta\nu_{-1}-2(1-\theta)\nu_{-1}+o(1)\\
 &=t'+o(1)
 \qquad(\mathbf{x},\mathbf{y}\in\mathcal B).
 \end{aligned}
\]
A vanishing decrease of the rare-letter frequencies, preserving the prescribed composition, makes all scalar products strictly greater than $t$ without changing the limiting proportions. By Stirling's formula, interchanging the signs leaves the multinomial rate unchanged, so the two blocks give
\[
 \liminf_{n\to\infty}m(n,\mathbf{k},t)^{1/n}
 \ge\frac{1}{\nu_{-1}^{\nu_{-1}}\nu_0^{\nu_0}\nu_1^{\nu_1}}.
\]

\medskip
\textbf{Upper bound: the formal-series estimate and optimization.}
Put $q'=k_{-1}'+k_1'-t'=\nu_1+3\nu_{-1}$. These limiting proportions can be realized with $q=u-t$ a power of two. Every scalar product divided by $n$ is at least $-(k_{-1}'+k_1')+o(1)$, and
\[
 -(k_{-1}'+k_1')>k_{-1}'+k_1'-2q'
 \qquad(t'<0),
\]
so the window condition in Theorem~\ref{thm:general} holds.

Put $u'=k_{-1}'+k_1'=\nu_1+\nu_{-1}$, and let $\lambda_0,\lambda_1,\lambda_2$ be the nonnegative limiting proportions of factors $P_0,P_1,P_2$ in a contributing product. The weight cutoff and the composition restriction $i+j\le u$ give
\[
 \lambda_0+\lambda_1+\lambda_2=1,\qquad
 \lambda_1+3\lambda_2\le q',\qquad
 \lambda_1+\lambda_2\le u'.
\]
By Stirling's formula, the corresponding multinomial term has exponential rate $\prod_{r=0}^2\lambda_r^{-\lambda_r}$. The weighted arithmetic--geometric mean inequality yields
\[
 \begin{aligned}
 \prod_{r=0}^2\lambda_r^{-\lambda_r}
 &\le (1+\eta\tau+\eta\tau^3)
 \tau^{-(\lambda_1+3\lambda_2)}
 \eta^{-(\lambda_1+\lambda_2)}\\
 &\le (1+\eta\tau+\eta\tau^3)\tau^{-q'}\eta^{-u'}
 \qquad(0<\tau,\eta\le1).
 \end{aligned}
\]
As in the preceding subsection, the polynomial factors in the coefficient count do not affect its exponential rate. Taking the largest multinomial rate therefore gives
\begin{equation}\label{eq:sharp-optimized-bound}
 \liminf_{n\to\infty}m(n,\mathbf{k},t)^{1/n}
 \le \tau^{-q'}\eta^{-(\nu_1+\nu_{-1})}
 (1+\eta\tau+\eta\tau^3)
 \qquad(0<\tau,\eta\le1).
\end{equation}
Choose the parameters so that $1,\eta\tau,\eta\tau^3$ have proportions $\nu_0,\nu_1,\nu_{-1}$:
\[
 \tau=\sqrt{\frac{\nu_{-1}}{\nu_1}},\qquad
 \eta=\sqrt{\frac{\nu_1^3}{\nu_{-1}\nu_0^2}}.
\]
We have $\nu_{-1}\le\nu_1$, and~\eqref{eq:sharp-threshold} gives $\eta\le1$. These choices yield
\[
 \eta\tau=\frac{\nu_1}{\nu_0},\qquad
 \eta\tau^3=\frac{\nu_{-1}}{\nu_0},\qquad
 1+\eta\tau+\eta\tau^3=\frac1{\nu_0}.
\]
Substitution into~\eqref{eq:sharp-optimized-bound} gives
\[
 \liminf_{n\to\infty}m(n,\mathbf{k},t)^{1/n}
 \le\frac{\tau^{-(\nu_1+3\nu_{-1})}
 \eta^{-(\nu_1+\nu_{-1})}}{\nu_0}
 =\frac{1}{\nu_{-1}^{\nu_{-1}}\nu_0^{\nu_0}\nu_1^{\nu_1}}.
\]
Together with the lower bound, this proves~\eqref{eq:sharp-rate}.
\end{proof}

The range of agreement starts at $t'=-2k_{-1}'$, where condition~\eqref{eq:sharp-threshold} reduces to $(k_1')^3\le k_{-1}'(k_0')^2$. As $t'$ increases towards zero, $\nu_1$ increases and $\nu_{-1}$ decreases. Equality in~\eqref{eq:sharp-threshold} therefore determines the unique other endpoint, where $\eta=1$ and the composition constraint ceases to improve the exponential rate.

\clearpage
\part{Applications}
\section{Geometric and extremal consequences}
\label{sec:applications}

We use the notation $V(n,\mathbf{k})$ and $m(n,\mathbf{k},t)$ defined
in the introduction.
Fix a prime $p$ and put $q=p^\ell$, where $\ell\ge1$.
The weights $w_j$ below are computed in characteristic $p$.

The first four applications below use the ratio
$|V(n,\mathbf{k})|/m(n,\mathbf{k},u-q)$, which we estimate in the
proportional regime. We optimize the bound supplied
by~\eqref{eq:tau-bound} and Stirling's formula in the asymptotic regime
$q\sim q'n$, $k_a\sim k'_a n$ as $n\to\infty$, where
$q',k'_a>0$ and $\sum_{a\in A}k'_a=1$.

However, a bound valid for every sufficiently large $n$ cannot rely
on $q/n$ approaching only a single prescribed $q'>0$: the nearest
power of $p$ to $q'n$ can differ from it by an amount of order $n$.
When the prime itself may vary with $n$, as in the Frankl--Wilson
theorem, one can instead choose a prime asymptotic to any prescribed
positive multiple of $n$.

For each $q'_0>0$, the interval $[q'_0n,pq'_0n]$ contains a power
of $p$ for every sufficiently large $n$. Hence, we maximize the
lower exponential base for $|V(n,\mathbf{k})|/m(n,\mathbf{k},u-q)$ separately at each
$q'\in[q'_0,pq'_0]$, allowing the alphabet and letter proportions
to change with $q'$. The infimum gives a common lower bound over
the interval. Maximizing over $q'_0$ therefore gives, for every
sufficiently large $n$,
\[
 \sup_{\substack{A,\,\mathbf{k},\,q\\\text{admissible}}}
 \frac{|V(n,\mathbf{k})|}{m(n,\mathbf{k},u-q)}
 \ge
 \left(
 \sup_{q'_0>0}
 \inf_{q'\in[q'_0,\,pq'_0]}
 \sup_{\substack{A,\,(k'_a),\,\tau\\
                  \text{admissible at }q'}}
 \frac{\tau^{q'}}
 {\displaystyle
  \left(\sum_{j=0}^{s-1}\tau^{w_j}\right)
  \prod_{a\in A}(k'_a)^{k'_a}}
 +o(1)
 \right)^n.
\]
In both estimates, the supremum on the left ranges over finite integer
alphabets $A$, nonnegative integer counts $k_a$ summing to $n$, and
powers $q=p^\ell$ with $\ell\ge1$, subject to the pairwise window
$u-\ip{\mathbf{x}}{\mathbf{y}}<2q$ for all
$\mathbf{x},\mathbf{y}\in V(n,\mathbf{k})$.
The innermost supremum on the right ranges over finite integer alphabets,
letter proportions and $0<\tau\le1$, subject to the same window.
Other applications may involve a different objective function or additional
parameters in the inner supremum. For the unrestricted count $C_A(n,q)$,
we use a greedy
order: start with any letter and choose each subsequent letter to
attain the minimum in~\eqref{eq:weights}. This order
minimizes~\eqref{eq:tau-bound} for every $\tau$, so no separate
optimization over orderings is needed for the unrestricted count.
This conclusion does not extend to the composition-restricted count
$C_{A,\mathbf{k}}(q)$.

For a bound along a subsequence, any fixed $q'>0$ can be realized
by taking $n_\ell=\lfloor p^\ell/q'\rfloor$, since
$p^\ell/n_\ell\to q'$. There is then no need to cover an interval.
Optimizing over these choices gives
\[
 \limsup_{n\to\infty}
 \left(
 \sup_{\substack{A,\,\mathbf{k},\,q\\\text{admissible}}}
 \frac{|V(n,\mathbf{k})|}{m(n,\mathbf{k},u-q)}
 \right)^{1/n}
 \ge
 \sup_{\substack{q'>0, A,\,(k'_a),\,\tau\\
                  \text{admissible at }q'}}
 \frac{\tau^{q'}}
 {\displaystyle
  \left(\sum_{j=0}^{s-1}\tau^{w_j}\right)
  \prod_{a\in A}(k'_a)^{k'_a}}.
\]
The lower exponential base in the second estimate is therefore
never smaller than in the first.

All new bounds below were obtained with $p=2$. Among the alphabets and parameter choices we examined, odd primes did not yield improvements.

\subsection{The Borsuk problem}
\label{sec:short-borsuk}

Let $f(n)$ be the least number of sets of strictly smaller diameter
needed to partition every bounded set of positive diameter in $\R^n$.
The vertices of a regular simplex show that $f(n)\ge n+1$.
Borsuk~\cite{Borsuk1933} asked whether equality always holds; Kahn and
Kalai~\cite{KahnKalai1993} disproved this in high dimensions, showing that
$f(n)>1.203^{\sqrt n}$ for all sufficiently large $n$.
Raigorodskii~\cite{Raigorodskii1999} subsequently obtained
$f(n)\ge(1.2255\ldots+o(1))^{\sqrt n}$.
The upper bound $f(n)\le(\sqrt{3/2}+o(1))^n$ follows from
Schramm's illumination theorem~\cite{Schramm1988}; a different proof was
given by Bourgain and Lindenstrauss~\cite{BourgainLindenstrauss1991}.

Suppose that
\begin{equation}\label{eq:borsuk-condition}
 t<\frac{\left(\sum_{a\in A}ak_a\right)^2}{n}.
\end{equation}
The construction of Kahn and Kalai~\cite{KahnKalai1993}, as used by
Raigorodskii~\cite{Raigorodskii1999,Raigorodskii2001} (see also Kalai's survey~\cite{Kalai2015Borsuk}), gives
\[
 f(N)\ge\frac{|V(n,\mathbf{k})|}{m(n,\mathbf{k},t)},
 \qquad\left(N=\frac{(n+1)(n+2)}2\right).
\]
For a fixed $c>1$, a bound valid for all sufficiently large $n$ yields
\[
 f(N)\ge(c+o(1))^n
 \quad\Longrightarrow\quad
 f(n)\ge\bigl(c^{\sqrt2}+o(1)\bigr)^{\sqrt n}.
\]
The two optimizations described at the start of
Section~\ref{sec:applications}, subject to~\eqref{eq:borsuk-condition}, give
\[
 f(n)\ge(1.3700\ldots+o(1))^{\sqrt n},
 \qquad \limsup_{n\to\infty}f(n)^{1/\sqrt n}\ge1.3839\ldots
\]
Both bounds are obtained from finite configurations and therefore also hold for the finite version of Borsuk's problem. To the best of our knowledge, these are the strongest known asymptotic lower bounds for this version.

\subsection{The Nelson--Hadwiger problem}
\label{sec:short-nh}

The Nelson--Hadwiger problem asks for the least number $\chi(\R^n)$
of colours needed to colour $\R^n$ so that points at Euclidean distance
one receive different colours; see~\cite{Soifer2009,Gardner1960,Hadwiger1961}.
Since scaling maps $\R^n$ onto itself, replacing unit distance by
any fixed positive distance leaves this number unchanged.

The classical asymptotic bounds were
\[
 \left(\frac{1+\sqrt2}{2}+o(1)\right)^n
 \le\chi(\R^n)\le(3+o(1))^n.
\]
The lower bound, obtained from the Frankl--Wilson theorem~\cite{FranklWilson1981},
settled Erd\H os's conjecture on exponential growth;
the upper bound is due to Larman and Rogers~\cite{LarmanRogers1972}. Raigorodskii's ternary
construction~\cite{Raigorodskii2000} raised the lower exponential base from
$1.207\ldots$ to $1.239\ldots$. More recently, Davies
\cite[Theorems~1.4--1.5]{Davies2026} proved
\[
 (2+o(1))^n\le\chi(\R^n)
 \le\left(\frac{3\sqrt3}{2}+o(1)\right)^n.
\]

Our refinement of the linear-algebraic method improves the estimates
of Frankl--Wilson and Raigorodskii. Davies's bound, obtained by
entirely different techniques, is currently the strongest known.

For any integer $t<u$, two vectors in $V(n,\mathbf{k})$ are at distance
$\sqrt{2(u-t)}$ precisely when their scalar product is $t$, since both
have squared norm $u$. In a colouring that forbids this distance,
each colour class therefore contains at most $m(n,\mathbf{k},t)$ points
of $V(n,\mathbf{k})$, giving
\[
 \chi(\R^n)\ge\frac{|V(n,\mathbf{k})|}{m(n,\mathbf{k},t)}.
\]
Carrying out the two optimizations described at the start of
Section~\ref{sec:applications}, we obtain, respectively, a bound for
all sufficiently large dimensions and a bound along a subsequence:
\[
 \chi(\R^n)\ge(1.3092\ldots+o(1))^n,
 \qquad
 \limsup_{n\to\infty}\chi(\R^n)^{1/n}\ge1.3160\ldots
\]

\subsection{Orthogonality and diameter graphs of prescribed girth}
\label{sec:girth}

The girth of a graph is the length of its shortest cycle, with value
infinity for a forest. Fix an integer $g\ge3$. We consider the suprema
of the chromatic numbers of finite graphs of girth at least $g$
represented in $\R^n$:
\begin{itemize}
\item $O_g(n)$: vertices are distinct unit vectors, and adjacent vectors
are perpendicular;
\item $D_g(n)$: vertices are distinct points, and adjacent points are at
distance equal to the diameter of the vertex set.
\end{itemize}
A representation is faithful if two vertices are adjacent precisely when
they satisfy the corresponding geometric condition. Here we allow
non-faithful representations: pairs satisfying the geometric condition
need not be edges. For the distinction between the two notions for
unit-distance graphs, see Alon and Kupavskii~\cite{AlonKupavskii2014}.
In the large-girth constructions, the edge set is exactly that of the
chosen subgraph. Other orthogonal pairs, or other pairs at the diameter,
remain nonedges; they do not create new cycles. The representations are
therefore not asserted to be faithful.

Prosanov~\cite{Prosanov2019} studied diameter graphs of large girth.
Buci\'c and Davies~\cite{BucicDavies2025} obtained lower bounds for both quantities
with bases independent of $g$.
For unit-distance graphs, Tomon~\cite[Theorem~1.3]{Tomon2026} showed that
every chromatic number of a unit-distance graph in $\R^n$ is also attained
by one of arbitrarily large girth in $\R^n$; together with
\cite[Theorem~1.4]{Davies2026}, this gives unit-distance graphs of girth at
least $g$ with chromatic number at least $(2+o(1))^n$. His argument does
not apply to orthogonality or diameter graphs.
Under condition~\eqref{eq:borsuk-condition}, the construction of
Buci\'c and Davies gives
\[
 O_g(2n+2)\ge\frac{|V(n,\mathbf{k})|}{m(n,\mathbf{k},t)},
 \qquad
 D_g\!\left(\frac{(2n+2)(2n+3)}2\right)
 \ge\frac{|V(n,\mathbf{k})|}{m(n,\mathbf{k},t)}.
\]
The interval optimization described at the start of
Section~\ref{sec:applications}, with an extra condition on $t$, gives a lower
bound of $(1.2493\ldots+o(1))^n$ for the ratio on the right-hand side
for every sufficiently large $n$.
Substitution yields the improvements to the bounds of Buci\'c and Davies
shown in Table~\ref{tab:girth-bounds}.

\begin{table}[htbp]
\centering
\begin{tabular}{lcc}\toprule
Quantity & Buci\'c--Davies & Present bound\\\midrule
$O_g(n)$ & $(1.0745+o(1))^n$ & $(1.1177+o(1))^n$\\
$D_g(n)$ & $(1.1070+o(1))^{\sqrt n}$
 & $(1.1704+o(1))^{\sqrt n}$\\\bottomrule
\end{tabular}
\caption{Lower bounds for the chromatic numbers of orthogonality and
diameter graphs of prescribed girth.}
\label{tab:girth-bounds}
\end{table}

\subsection{Chromatic numbers of spheres of prescribed radius}
\label{sec:spheres}

Write $S_r^{n-1}=\{\mathbf{x}\in\R^n:\|\mathbf{x}\|_2=r\}$, and let
$\chi(S_r^{n-1})$ be the least number of colours needed to colour this
sphere so that points at distance one receive different colours.
At $r=1/2$, the forbidden pairs are precisely the antipodal pairs,
so $\chi(S_{1/2}^{n-1})=2$.

For every fixed $r>1/2$, Lov\'asz~\cite{Lovasz1983} proved
$\chi(S_r^{n-1})\ge n$, and Raigorodskii~\cite{Raigorodskii2012} established
exponential growth. Kostina and Raigorodskii~\cite{KostinaRaigorodskii2015} (see also
Kostina~\cite{Kostina2019}) improved these lower bounds for $r\ge0.67\ldots$
and showed that the bound $(1.239\ldots+o(1))^n$ of~\cite{Raigorodskii2000}
holds for all $r\ge0.79\ldots$.
Rogers's covering theorem~\cite{Rogers1963} gives
$\chi(S_r^{n-1})\le(2r+o(1))^n$; Prosanov~\cite{Prosanov2018} improved
this upper exponential base for $r>\sqrt5/2$.

For $t<u$, the standard construction used by Kostina and Raigorodskii gives
\[
 \chi(S_r^{n-1})\ge\frac{|V(n,\mathbf{k})|}{m(n,\mathbf{k},t)}
 \qquad\left(
 u-\frac{\left(\sum_{a\in A}ak_a\right)^2}{n}\le2r^2(u-t)
 \right).
\]
The resulting realization of the forbidden-scalar-product graph on
$S_r^{n-1}$ is faithful: its edges are exactly the pairs at unit distance.
The interval optimization described at the start of
Section~\ref{sec:applications}, with this additional inequality, gives
the present bounds in Table~\ref{tab:sphere-bases}.

\begin{table}[htbp]
\centering
\begin{tabular}{ccc}\toprule
$r$ & Previous bound~\cite{Raigorodskii2012,KostinaRaigorodskii2015,Kostina2019,Raigorodskii2000} & Present bound\\\midrule
$0.60$ & $1.0485$ & $1.0706$\\
$0.65$ & $1.0896$ & $1.1534$\\
$0.70$ & $1.1448$ & $1.2368$\\
$0.75$ & $1.2105$ & $1.3022$\\
$0.80$ & $1.2395$ & $1.3092$\\\bottomrule
\end{tabular}
\caption{Lower exponential bases for the chromatic numbers of spheres
of prescribed radius.}
\label{tab:sphere-bases}
\end{table}

\subsection{Forbidden scalar products}
\label{sec:ternary}

The following lemma combines the counting argument of Ponomarenko
and Raigorodskii~\cite{PonomarenkoRaigorodskii2014} with Theorem~\ref{thm:general}.

\begin{lemma}\label{lem:subset-count}
Let $\varnothing\ne W\subseteq V(n,\mathbf{k})$ and suppose that
$q=u-t=2^\ell$ for an integer $\ell\ge1$.
If $u-\ip{\mathbf{x}}{\mathbf{y}}<2q$ for all $\mathbf{x},\mathbf{y}\in W$, then
\[
 m(n,\mathbf{k},t)\le
 \frac{|V(n,\mathbf{k})|}{|W|}\,C_{A,\mathbf{k}}(q).
\]
\end{lemma}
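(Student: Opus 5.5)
The plan is a standard averaging (double counting) argument over the symmetry group of the layer, combined with Theorem~\ref{thm:general} applied inside each translate of $W$.

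Let $\mathcal F\subseteq V(n,\mathbf{k})$ be a family of size $m(n,\mathbf{k},t)$ with no two distinct members having scalar product $t=u-q$. The symmetric group $S_n$ acts on $V(n,\mathbf{k})$ by permuting coordinates. This action is transitive on the layer and preserves scalar products.

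\textbf{Bounding each translate.} For each $\pi\in S_n$, consider $\mathcal F\cap\pi(W)$. Any two vectors of $\pi(W)$ are images of two vectors of $W$, so they satisfy $u-\ip{\mathbf{x}}{\mathbf{y}}<2q$, that is, $\ip{\mathbf{x}}{\mathbf{y}}>u-2q$. Distinct members of $\mathcal F$ avoid the scalar product $u-q$. Hence $\mathcal F\cap\pi(W)$ satisfies both hypotheses of Theorem~\ref{thm:general}, with the same $A$, $\mathbf{k}$ and $q=2^\ell$ ($p=2$). That theorem gives $|\mathcal F\cap\pi(W)|\le C_{A,\mathbf{k}}(q)$ for every $\pi$.

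\textbf{Double counting.} Count the pairs $(\pi,\mathbf{x})$ with $\mathbf{x}\in\mathcal F\cap\pi(W)$. Fix $\mathbf{x}\in V(n,\mathbf{k})$ and $\mathbf{w}\in W$. By transitivity, the number of $\pi$ with $\pi(\mathbf{w})=\mathbf{x}$ equals the stabilizer order $n!/|V(n,\mathbf{k})|$. Thus, for a fixed $\mathbf{x}$, the number of $\pi$ with $\mathbf{x}\in\pi(W)$ is at least this stabilizer order times $|W|$. In fact it is exactly that, since $\pi^{-1}(\mathbf{x})$ determines a unique $\mathbf{w}$. Therefore
\[
|\mathcal F|\,|W|\,\frac{n!}{|V(n,\mathbf{k})|}=\sum_{\pi\in S_n}|\mathcal F\cap\pi(W)|\le n!\,C_{A,\mathbf{k}}(q).
\]
Rearranging gives the claimed bound $m(n,\mathbf{k},t)\le |V(n,\mathbf{k})|\,C_{A,\mathbf{k}}(q)/|W|$.

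The only point requiring care is checking that the window hypothesis in Theorem~\ref{thm:general} is needed only among members of the subfamily, and not for all pairs of the layer. The theorem indeed assumes it only for $\mathbf{x},\mathbf{y}\in\mathcal F$, so it applies to $\mathcal F\cap\pi(W)$. The remaining steps are the routine orbit-counting identity above.
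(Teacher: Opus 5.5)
Your proof is correct and is essentially the paper's argument. The paper double counts pairs $(\mathbf{x},\pi)$ with $\mathbf{x}\in\mathcal F$ and $\pi\mathbf{x}\in W$, applying Theorem~\ref{thm:general} to the image subfamily of $W$; this is your count of $\mathbf{x}\in\mathcal F\cap\pi(W)$ with $\pi$ replaced by $\pi^{-1}$, and your stabilizer order $n!/|V(n,\mathbf{k})|$ is the paper's $\prod_{a\in A}k_a!$.
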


\begin{proof}
Let $\mathcal F\subseteq V(n,\mathbf{k})$ avoid scalar product $t$.
For a permutation $\pi$ of $\{1,\ldots,n\}$, put
$(\pi\mathbf{x})_i=x_{\pi(i)}$. Count pairs $(\mathbf{x},\pi)$
with $\mathbf{x}\in\mathcal F$ and $\pi\mathbf{x}\in W$.

For fixed $\pi$, the images form a subfamily of $W$ avoiding scalar product $t$.
The hypothesis on $W$ and Theorem~\ref{thm:general} therefore
bound the number of such pairs by $C_{A,\mathbf{k}}(q)$.
For fixed $\mathbf{x}\in\mathcal F$ and $\mathbf{w}\in W$, exactly
$\prod_{a\in A}k_a!$ permutations satisfy $\pi\mathbf{x}=\mathbf{w}$,
obtained by matching positions carrying the same letter. Thus
\[
 |\mathcal F|\,|W|\prod_{a\in A}k_a!
 \le n!\,C_{A,\mathbf{k}}(q).
\]
Using $|V(n,\mathbf{k})|=n!/\prod_{a\in A}k_a!$
and maximizing over $\mathcal F$ proves the lemma.
\end{proof}

We construct $W$ by partitioning the coordinate positions and prescribing
the letter counts in each part so that all pairwise scalar products
exceed $u-2q$, as in~\cite{PonomarenkoRaigorodskii2014}. The set's cardinality is a product of
multinomial coefficients.
For $A=\{-1,0,1\}$, with $k_a/n\to k'_a$ and $t/n\to t'$, the
second optimization described at the start of
Section~\ref{sec:applications} gives the present bounds in
Table~\ref{tab:ternary-bases}.

\begin{table}[htbp]
\centering
\begin{tabular}{cccc}\toprule
$(k'_{-1},k'_0,k'_1)$ & $t'$ & Ponomarenko--Raigorodskii & Present bound\\\midrule
$(0.05,0.75,0.20)$ & $1/8$ & $1.6443$ & $1.6248$\\
$(0.05,0.75,0.20)$ & $1/20$ & $1.7084$ & $1.6659$\\
$(0.05,0.65,0.30)$ & $3/20$ & $1.7876$ & $1.7431$\\
$(0.10,0.60,0.30)$ & $1/5$ & $1.9955$ & $1.9458$\\
$(0.10,0.60,0.30)$ & $1/15$ & $2.0760$ & $1.9763$\\
$(0.25,0.50,0.25)$ & $1/4$ & $2.3128$ & $2.2342$\\
$(0.25,0.50,0.25)$ & $1/6$ & $2.3220$ & $2.2104$\\\bottomrule
\end{tabular}
\caption{Upper bounds for $\liminf_{n\to\infty}m(n,\mathbf{k},t)^{1/n}$
for ternary vectors with prescribed letter proportions and forbidden
scalar product.}
\label{tab:ternary-bases}
\end{table}

The estimates of Ponomarenko and Raigorodskii also hold for admissible
parameter choices in every sufficiently large dimension; the present
estimates are obtained along a subsequence.

\subsection{Several prescribed forbidden distances}
\label{sec:multiple}

For a finite set $\mathcal L\subset(0,\infty)$, let
$\chi(\R^n;\mathcal L)$ be the least number of colours needed to
colour $\R^n$ so that points at any distance in $\mathcal L$ receive
different colours. We consider the prescribed sets
\[
 \mathcal L_r=\{1,\sqrt2,\ldots,\sqrt r\}\qquad(r=1,2,\ldots).
\]
For each fixed $r$, we study the exponential growth as $n\to\infty$.
Polynomial constructions of Raigorodskii and
Shitova~\cite{RaigorodskiiShitova2008} and Gorskaya, Mitricheva,
Protasov and Raigorodskii~\cite{GorskayaMitrichevaProtasovRaigorodskii2009}
give lower bounds for these sets.
Naslund's partition-rank method~\cite{Naslund2023}
gives further exponential lower bounds.

For a finite set $S\subset\Z$, define
\[
 m(n,\mathbf{k},S)=\max\left\{|\mathcal F|:
 \begin{array}{l}
 \mathcal F\subseteq V(n,\mathbf{k}),\\
 \ip{\mathbf{x}}{\mathbf{y}}\notin S\quad(\mathbf{x},\mathbf{y}\in\mathcal F,\ \mathbf{x}\ne\mathbf{y})
 \end{array}\right\}.
\]
Thus $m(n,\mathbf{k},\{t\})=m(n,\mathbf{k},t)$.

\begin{theorem}\label{thm:multiple-window}
Let $p$ be a prime, let $r\ge1$ be an integer, and let $q=p^\ell$,
where $\ell\ge1$ is an integer. Suppose that
\[
 \max_{\mathbf{x},\mathbf{y}\in V(n,\mathbf{k})}\bigl(u-\ip{\mathbf{x}}{\mathbf{y}}\bigr)<(r+1)q.
\]
With the notation of Section~\ref{sec:general} in characteristic $p$, we have
\[
 m\bigl(n,\mathbf{k},\{u-q,u-2q,\ldots,u-rq\}\bigr)
 \le C_{A,\mathbf{k}}(q)\le C_A(n,q).
\]
In particular,
\[
 m\bigl(n,\mathbf{k},\{u-q,u-2q,\ldots,u-rq\}\bigr)
 \le q\tau^{-q}\left(\sum_{j=0}^{s-1}\tau^{w_j}\right)^n
 \qquad(0<\tau\le1).
\]
\end{theorem}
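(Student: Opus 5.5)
The plan is to rerun the proof of Theorem~\ref{thm:general} verbatim, changing only the step that establishes independence of the coefficient functions. As the remark after that proof notes, the window and forbidden-value hypotheses were used only to get the congruence $\ip{\mathbf{x}}{\mathbf{y}}\not\equiv u\pmod q$ for distinct members of $\mathcal F$. Everything after that point (the Newton expansion \eqref{eq:newton-division}, the product formula \eqref{eq:general-product}, the vanishing of $H_I$ for $I\notin\mathcal I$ and the divisibility by $z^{W(I)}$) depends only on $V(n,\mathbf{k})$ and $q$. So it suffices to verify this congruence under the new hypotheses.

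Let $\mathcal F\subseteq V(n,\mathbf{k})$ have no two distinct members with scalar product in $\{u-q,\ldots,u-rq\}$. For distinct $\mathbf{x},\mathbf{y}\in\mathcal F$ put $d=u-\ip{\mathbf{x}}{\mathbf{y}}=\tfrac12\|\mathbf{x}-\mathbf{y}\|^2$. Then $d>0$, and by the hypothesis on the maximum $d<(r+1)q$. The multiples of $q$ in the open interval $(0,(r+1)q)$ are exactly $q,2q,\ldots,rq$, and these values are excluded. Hence $q\nmid d$. We then define
\[
\phi_{\mathbf{x}}(\mathbf{y})=[z^{q-1}]T^{u-1-\ip{\mathbf{x}}{\mathbf{y}}} .
\]
By Lemma~\ref{lem:selector}, $\phi_{\mathbf{x}}(\mathbf{y})=0$ for distinct $\mathbf{x},\mathbf{y}\in\mathcal F$, while $\phi_{\mathbf{x}}(\mathbf{x})=1$, since there $d=0$ is a multiple of $q$. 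Restricted to $\mathcal F$, these functions form a unit diagonal matrix, so they are linearly independent over $\F_p$. The translation to a nonnegative alphabet preserves every difference $u-\ip{\mathbf{x}}{\mathbf{y}}$, so this reduction also goes through unchanged.

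Next, the spanning step from the proof of Theorem~\ref{thm:general}, namely \eqref{eq:convolution}, puts every $\phi_{\mathbf{x}}$ in the span of the functions $f_{I,e}$ with $I\in\mathcal I$ and $W(I)\le e\le q-1$. There are $C_{A,\mathbf{k}}(q)$ of these, which gives $|\mathcal F|\le C_{A,\mathbf{k}}(q)$. Adding the nonnegative terms with $I\notin\mathcal I$ gives the bound by $C_A(n,q)$. The final inequality $C_A(n,q)\le q\tau^{-q}\bigl(\sum_j\tau^{w_j}\bigr)^n$ is exactly the argument of Corollary~\ref{cor:optimization}, which uses only $(q-t)_+\le q\tau^{t-q}$.

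There is essentially no obstacle. The one point that needs care is confirming that nothing in the spanning argument used the condition $d<2q$. It did not: that condition entered only through Lemma~\ref{lem:selector}, which gives vanishing for every integer $d$ not divisible by $q$, whatever its size.
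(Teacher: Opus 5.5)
Your proposal is correct and follows the route the paper intends: the paper gives no separate proof of Theorem~\ref{thm:multiple-window}, relying instead on its remark after the proof of Theorem~\ref{thm:general} that only the congruence $\ip{\mathbf{x}}{\mathbf{y}}\not\equiv u\pmod q$ for distinct members is needed. You derive exactly that congruence from the observation that the only multiples of $q$ in $(0,(r+1)q)$ are the excluded values $q,\ldots,rq$, and the rest of the argument carries over unchanged.
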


Scaling $V(n,\mathbf{k})$ by $(2q)^{-1/2}$ makes the scalar products
$u-q,\ldots,u-rq$ correspond precisely to the distances in
$\mathcal L_r$, giving
\[
 \chi(\R^n;\mathcal L_r)
 \ge\frac{|V(n,\mathbf{k})|}
 {m\bigl(n,\mathbf{k},\{u-q,\ldots,u-rq\}\bigr)}.
\]
Applying Theorem~\ref{thm:multiple-window} with $p=2$ and the interval
optimization described at the start of Section~\ref{sec:applications},
subject to $u-\ip{\mathbf{x}}{\mathbf{y}}<(r+1)q$, we obtain the present bounds in
Table~\ref{tab:multiple-bases}.

\begin{table}[htbp]
\centering
\begin{tabular}{ccc}\toprule
$r$ & Naslund & Present bound\\\midrule
$5$ & $2.0130$ & $2.1936$\\
$10$ & $2.6916$ & $2.9504$\\
$20$ & $3.6930$ & $4.0617$\\\bottomrule
\end{tabular}
\caption{Lower exponential bases for $\chi(\R^n;\mathcal L_r)$
with $r$ fixed and $n\to\infty$. Both bounds hold in every
sufficiently large dimension.}
\label{tab:multiple-bases}
\end{table}

\subsection{Unit-distance colourings in integer-power norms}
\label{sec:integer-norms}

For a real $1\le r<\infty$, let $\chi(\R^n,\|\cdot\|_r)$ be the least
number of colours needed to colour $\R^n$ so that points at distance
one in the norm $\|\mathbf{x}\|_r=(\sum_{i=1}^n|x_i|^r)^{1/r}$ receive different
colours.

\begin{table}[htbp]
\centering
\begin{tabular}{ccc}\toprule
$r$ & Lower base & Upper base\\\midrule
$1$ & $(1+\sqrt3)/2=1.3660\ldots$ & $4$\\
$1<r<2$ & $(1+\sqrt2)/2=1.2071\ldots$ & $4$\\
$2$ & $2$ & $3\sqrt3/2$\\
$2<r<\infty$ & $(1+\sqrt2)/2=1.2071\ldots$ & $B_r<4$\\
$\infty$ & $2$ & $2$\\\bottomrule
\end{tabular}
\caption{Previously known lower and upper exponential bases for
$\chi(\R^n,\|\cdot\|_r)$. For each fixed finite $r$, the bounds hold
in every sufficiently large dimension. For the maximum norm
$\|\mathbf{x}\|_\infty=\max_i|x_i|$, the last row gives the exact value $2^n$
for every $n$.}
\label{tab:norm-history}
\end{table}

The lower bounds in Table~\ref{tab:norm-history} come from the binary
Frankl--Wilson construction~\cite{FranklWilson1981}, valid for every
finite real $r\ge1$, and Raigorodskii's ternary refinement for
$r=1$~\cite{Raigorodskii2004}. The Euclidean bounds for $r=2$ are due
to Davies~\cite{Davies2026}; see Section~\ref{sec:short-nh}.

Covering upper bounds for arbitrary norms obtained by
F\"uredi and Kang~\cite{FurediKang2008}
were improved by
Kupavskiy~\cite{Kupavskiy2011},
who also obtained the upper bases $B_r<4$ for each fixed $r>2$,
with $B_r\to2$ as $r\to\infty$.

We now restrict $r$ to a positive integer. A separate distance kernel
is needed because a fixed scalar product on a ternary layer does not
determine its $\ell_r$ distance when $r\ne2$.

For $\mathbf{x},\mathbf{y}\in V(n,\mathbf{k})$ and $d>0$, put
\[
 \Delta_r(\mathbf{x},\mathbf{y})=\frac12\sum_{i=1}^n|x_i-y_i|^r,
\]
and
\[
 m_r(n,\mathbf{k},d)=
 \max\left\{|\mathcal F|:
 \begin{array}{l}
 \mathcal F\subseteq V(n,\mathbf{k}),\\
 \Delta_r(\mathbf{x},\mathbf{y})\ne d\quad(\mathbf{x},\mathbf{y}\in\mathcal F,\ \mathbf{x}\ne\mathbf{y})
 \end{array}\right\}.
\]
The value $\Delta_r(\mathbf{x},\mathbf{y})$ is an integer because $\mathbf{x}$ and $\mathbf{y}$ have the
same letter counts.

Let $p$ be a prime and work in $\F_p[[z]]$, with $T=1+z$. Form the matrix
\[
 K_r=\left(T^{(|a-b|^r-a^2-b^2)/2}\right)_{a,b\in A}.
\]
Perform Gaussian elimination without normalizing the pivot rows, choosing
at each step a nonzero pivot of smallest $z$-order in the remaining submatrix.
Here the $z$-order is the least degree of a nonzero term.
Denote the pivot orders by $w_0^{(r)},\ldots,w_{s-1}^{(r)}$ and their
original row labels by $a_0,\ldots,a_{s-1}$; ties may be resolved arbitrarily.
Define $\mathcal I$ by~\eqref{eq:admissible} using this order of $A$, and put
\[
 W_r(I)=\sum_{\nu=1}^n w_{i_\nu}^{(r)}
 \quad\bigl(I=(i_1,\ldots,i_n)\bigr),\qquad
 Z_{A,r}(\tau)=\sum_{j=0}^{s-1}\tau^{w_j^{(r)}}.
\]

\begin{theorem}\label{thm:integer-norm}
Let $p$ be a prime, let $r\ge1$ be an integer, and let $q=p^\ell$,
where $\ell\ge1$ is an integer. If
\[
 \max_{\mathbf{x},\mathbf{y}\in V(n,\mathbf{k})}\Delta_r(\mathbf{x},\mathbf{y})<2q,
\]
then
\[
 \begin{split}
 m_r(n,\mathbf{k},q)
 &\le C^{(r)}_{A,\mathbf{k}}(q)
 :=\sum_{I\in\mathcal I}(q-W_r(I))_+\\
 &\le C^{(r)}_A(n,q)
 :=\sum_{I\in\{0,\ldots,s-1\}^n}(q-W_r(I))_+.
 \end{split}
\]
In particular,
\[
 m_r(n,\mathbf{k},q)\le q\tau^{-q}Z_{A,r}(\tau)^n
 \qquad(0<\tau\le1).
\]
\end{theorem}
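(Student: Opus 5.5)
We follow the proof of Theorem~\ref{thm:general}, replacing the Newton expansion by a factorization of the kernel matrix $K_r$ obtained from the elimination. For $\mathbf{x}\in\mathcal F$ we set
\[
 \phi_{\mathbf{x}}(\mathbf{y})=[z^{q-1}]T^{\Delta_r(\mathbf{x},\mathbf{y})-1}.
\]
Since $0<\Delta_r(\mathbf{x},\mathbf{y})<2q$ for distinct $\mathbf{x},\mathbf{y}$ (positivity holds because $\mathbf{x}\ne\mathbf{y}$), and $\Delta_r\ne q$ on $\mathcal F$, Lemma~\ref{lem:selector} gives $\phi_{\mathbf{x}}(\mathbf{y})=\delta_{\mathbf{x}\mathbf{y}}$ on $\mathcal F$. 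Hence these functions are independent. Because all vectors have common squared norm $u$, we can write
\[
 \Delta_r(\mathbf{x},\mathbf{y})=\sum_i\tfrac12\bigl(|x_i-y_i|^r-x_i^2-y_i^2\bigr)+u .
\]
The summands are integers: $|a-b|^r\equiv a-b\equiv a^2+b^2\pmod 2$. This gives
\[
 T^{\Delta_r-1}=T^{u-1}\prod_i K_r(x_i,y_i),
\]
so it suffices to expand each entry $K_r(b,a)$ in separated form.

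\textbf{Key step: the elimination gives a factorization.} Running Gaussian elimination with smallest-order pivots and no normalization produces, after reordering rows and columns as $a_0,\ldots,a_{s-1}$, a decomposition
\[
 K_r(b,a)=\sum_{j=0}^{s-1}c_j(b)\,G_j(a).
\]
Here $G_j(a)$ is the $j$-th pivot row after $j$ elimination steps, so $G_j(a_h)=0$ for $h<j$. Each $c_j(b)$ is the multiplier column divided by the pivot. The obstacle is to show that $c_j(b)\in\F_p[[z]]$, i.e.\ that no negative powers of $z$ appear, and that every value of $G_j$ is divisible by $z^{w_j^{(r)}}$. The second claim holds because the pivot is chosen of minimal $z$-order in the remaining submatrix, so every entry of row $j$ of the reduced matrix has order $\ge w_j^{(r)}$. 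The first claim follows from the same choice: every multiplier is an entry of order at least the pivot's order divided by the pivot, which is a power series. We also need that subsequent updates keep entries in $\F_p[[z]]$ with orders not dropping below the next pivot's order. This holds since the updates are products of power series, and the next pivot is by definition the minimum over what remains. If the elimination stops early because the remaining submatrix is zero, we set the remaining $G_j=0$. The pivots are nonzero as long as the submatrix is nonzero, and the entries of $K_r$ are units, so the first pivot has order $0$.

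\textbf{Counting.} Substituting gives
\[
 T^{\Delta_r-1}=\sum_I b_I(\mathbf{x})H_I(\mathbf{y}),\qquad H_I(\mathbf{y})=\prod_\nu G_{i_\nu}(y_\nu),
\]
with $b_I(\mathbf{x})=T^{u-1}\prod_\nu c_{i_\nu}(x_\nu)\in\F_p[[z]]$. Here $T^{u-1}$ is a power series even if $u=0$, because $T^{-1}\in\F_p[[z]]$. The zero pattern $G_j(a_h)=0$ for $h<j$ is exactly the Newton zero pattern, so the argument preceding Theorem~\ref{thm:general} shows that $H_I$ vanishes on $V(n,\mathbf{k})$ unless $I\in\mathcal I$. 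The values of $H_I$ are divisible by $z^{W_r(I)}$. Extracting $[z^{q-1}]$ as in \eqref{eq:convolution} puts every $\phi_{\mathbf{x}}$ in the span of the functions $[z^e]H_I$ with $I\in\mathcal I$ and $W_r(I)\le e\le q-1$. This yields $m_r(n,\mathbf{k},q)\le C^{(r)}_{A,\mathbf{k}}(q)\le C^{(r)}_A(n,q)$. The final inequality then follows verbatim from the proof of Corollary~\ref{cor:optimization}. Unlike Theorem~\ref{thm:general}, no translation step is needed here, since $K_r$ is defined for any integer letters directly.
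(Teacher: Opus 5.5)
Your proof is correct and follows the route the paper intends. The unnormalized minimal-order elimination yields $K_r(b,a)=\sum_j c_j(b)G_j(a)$ with $c_j(b)\in\F_p[[z]]$ and every value of $G_j$ divisible by $z^{w_j^{(r)}}$, after which the proof of Theorem~\ref{thm:general} carries over verbatim, with no translation step needed. One small imprecision: a minimal-order pivot may be off-diagonal, so ``reordering rows and columns as $a_0,\ldots,a_{s-1}$'' is not always possible, and your pivot rows $G_j$ vanish at the earlier pivot \emph{column} labels, whereas $\mathcal I$ is defined via the pivot \emph{row} labels. This is repaired by noting that $c_j(b)=0$ whenever $b$ is an earlier pivot row label, so $b_I(\mathbf{x})$ vanishes on $V(n,\mathbf{k})$ for $I\notin\mathcal I$ (alternatively, use the symmetry of $K_r$ to transpose the elimination).
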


Scaling the layer by $(2q)^{-1/r}$ makes $\Delta_r(\mathbf{x},\mathbf{y})=q$
equivalent to unit distance, giving
\[
 \chi(\R^n,\|\cdot\|_r)
 \ge\frac{|V(n,\mathbf{k})|}{m_r(n,\mathbf{k},q)}.
\]
Applying Theorem~\ref{thm:integer-norm} with $p=2$ and the second
optimization described at the start of Section~\ref{sec:applications},
we obtain the following improvement over the binary bound:
\[
 \limsup_{n\to\infty}\chi(\R^n,\|\cdot\|_3)^{1/n}\ge1.2091.
\]



\paragraph*{AI usage.}
Recent GPT models were used to obtain the results and to edit the manuscript.

\end{document}